\pgfplotsset{compat = newest}
\title[Categories of Type Charge--Conserving--with--Glue]{On Higher Representation Theory via Categories \\ of type Charge-Conserving--with--Glue}
\author{Paul P. Martin$^1$}
\address{$^1$University of Leeds}
\author{Sarah Almateari$^{1}$}
\author{Eric C. Rowell$^{1,3}$}
\address{$^3$Texas A\&M University}
\newcounter{minidef}[section]
\newcommand{\mdef}{\refstepcounter{theorem}       \medskip \noindent ({\bf \thetheorem}) }
\newcounter{minicapt}
\newcommand{\ppm}[1]{\textcolor{red}{#1}}
\newcommand{\ppmx}[1]{}  
\newcommand{\ppmm}[1]{#1}  
\newcommand{\soutx}[1]{} 
\newcommand{\ssa}[1]{\textcolor{blue}{#1}}
\newcommand{\Bcat}{{\mathsf B}}  
\newcommand{\C}{{\mathbb{C}}}  
\newcommand{\R}{{\mathbb{R}}}
\newcommand{\N}{{\mathbb{N}}}  
\newcommand{\Z}{{\mathbb{Z}}}  
\newcommand{\AAAA}{\mathscr{A}}
\newcommand{\mqxx}[1]{}
\newcommand{\Sym}{\Sigma}  
\newcommand{\MATCH}{{\mathsf{Match}}}
\newcommand{\MAT}{{\mathsf{Mat}}} 
\newcommand{\MATCHA}{\MAT_{ACC}}
\newcommand{\MATCHa}{\mathsf{Matcha}}
\DeclareRobustCommand*{\mfaktor}[3][]
{
   { \mathpalette{\mfaktor@impl@}{{#1}{#2}{#3}} }
}
\newcommand*{\mfaktor@impl@}[2]{\mfaktor@impl#1#2}
\newcommand*{\mfaktor@impl}[4]{
   \settoheight{\faktor@zaehlerhoehe}{\ensuremath{#1#2{#3}}}%
   \settoheight{\faktor@nennerhoehe}{\ensuremath{#1#2{#4}}}%
      \raisebox{-0.5\faktor@zaehlerhoehe}{\ensuremath{#1#2{#3}}}%
      \mkern-4mu\diagdown\mkern-5mu%
      \raisebox{0.5\faktor@nennerhoehe}{\ensuremath{#1#2{#4}}}%
}
\newcommand{\U}[1]{\underline{#1}}
\newcommand{\NN}{\U{N}}
\newcommand{\beq}{\begin{equation}}
\newcommand{\eq}{\end{equation}}
\newcommand{\mat}{\begin{bmatrix}}
\newcommand{\tam}{\end{bmatrix}}
\newtheorem{theorem}{Theorem}[section]
\newtheorem{corollary}{Corollary}[theorem]
\newtheorem{lemma}[theorem]{Lemma}
\theoremstyle{definition}
\newcommand{\ff}{{\mathsf f}}
\newcommand{\Lamm}[2]{\Lambda^{#1}_{#2}}  
\newcommand{\lamm}{{\bf m}}
\newcommand{\tens}
{\otimes 
}
\newcommand{\ignore}[1]{}
\newcommand{\lav}{\lambda}
\newcommand{\mw}{\mu}
\newcommand{\off}{\overline{\ff}}
\newcommand{\unk}[2]{\raisebox{-5pt}{${#1 \atop #2}$}}
\newcommand{\unkk}[2]{\raisebox{-5pt}{$\overset{\scalebox{1.02}{$#1$}}{\scriptscriptstyle{#2}}$}} 
\newcommand{\unkkk}[2]{\raisebox{-4pt}{\begin{array}{c} #1 \\ {}^{#2}\end{array}}}
\begin{document}

\maketitle

\begin{abstract}
    In this paper we introduce a strict monoidal subcategory of the category of matrices,  
    suitable to address a higher representation theoretic analogue of radicals (non-semisimplicity) 
    in ordinary representation theory. We show the extent to which this analogue has analogous 
    representation theoretic properties. To illustrate, we apply to two key problems in the study of 
    braid representations (strict monoidal functors from the braid category $\Bcat$ to 
    the matrix category):  
    the classification problem; and the problem of analysing the 
    ordinary braid group representations that 
    braid representations generate
    in towers. 
\end{abstract}

\tableofcontents 

\section{Introduction}

Recently there has been progress classifying representations of 
certain strict monoidal categories 
by controlling the representing (or target) category.  
In particular   this approach uses   
target categories with 
intrinsic rigidity, such as  Match subcategories of matrix categories \cite{
MartinRowell2021}, 
also known as charge-conserving or CC categories; 
and addresses source categories such as the braid category \cite{mac2013categories}.
The limitation of such approaches is that some classes of representations may not have representatives in the
given rigid setting. 
For example, Hietarinta's classification \cite{Hietarinta92} of braid representations  in rank-2 
(in the categorical formulation these are natural functors $F:\Bcat \rightarrow \MAT^2$ from the braid category $\Bcat$ \cite{
mac2013categories} 
to the subcategory $\MAT^2$ of the category of matrices) 
contains representations with 
no Match equivalent. 
(This is not obvious. But we will give examples as part of our analysis.)
If the target constraints are relaxed completely then the classification problem is typically impossibly hard in general rank, 
so the question is how to enlarge the target to include a suitable 
transversal of representations without reaching the impossibility horizon. 
One attempt in this direction is the additive charge-conserving (ACC) class of categories \cite{HietarintaMartinRowell}. 
However, 
ACC coincides with CC in rank 2; and while
ACC braid representations are classified in rank 3,  in rank 4 this is already not practical 
(see e.g. \cite{Almateari} for a consideration of the complexity of the required ansatz). 
In the present paper we introduce a different enlargement of the Match categories, specifically motivated by the representations in Hietarinta's classification \cite{Hietarinta92} that are not of CC type.  
Remarkably, this enlargement is sufficiently capacious to welcome all but 
two 
of the classes found in \cite{Hietarinta92} 
(up to monoidal equivalence), 
and yet shows some promise of feasibility in rank-$3$, as we shall demonstrate.

In \S\ref{ss:constructCCwg} we construct the CCwg categories.
In \S\ref{ss:glue-calculus} we give the core general representation theoretic properties.
In \S\ref{ss:apps} we illustrate several applications of the method, both in constructing and classifying representations and also in analysing representation structure. 
In \S\ref{ss:repsN3} we apply this machinery to extend the CC classification of braid representations 
in rank 3 to CCwg. 
In \S\ref{ss:repsN2} we apply the machinery to determine representation structures in rank-2; and some in rank-3. 

\medskip 

Higher representation theory has been approached from several different directions. 
In ordinary representation theory many aspects of the problem align, resulting in a high degree of 
canonicalness in the way the problem can be formulated (see e.g. \cite{MartinRowell2021} for some discussion of this in comparison with the higher setting). 
In higher representation theory there is much less canonicalness, so that approaches necessarily start with 
some significant choice of assumptions, and essentially diverge from there. 
A good illustration of this is the difference between the approach of the school of Mazorchuk--Miemietz 
\cite{mazorchuk2015transitive,%
mazorchuk2017,Mackaay_2023}
 and that of Rumynin--Wendland \cite{RumyninWendland} (although there are others also worth comparing).
A third approach, from a different direction, is the `charge-conserving/target-narrowing' approach as in \cite{MartinRowell2021}. 
In a nutshell the difference between approaches is often triggered by the different intended applications. 
The Mazorchuk--Miemietz approach is what is sometimes called capital-T representation Theory - motivated by understanding the theory itself. Other approaches are motivated by the need to understand better the `source' category - the object that is being represented - capital-R Representation theory. 
This can be motivated in terms of understanding structures needed, for example, in topological quantum computation \cite{RowellWangBull}; and in the study of topological invariants \cite{Turaev88}. 

\vspace{.52cm}

\ignore{{\ssa{Conceptual overview.
Charge-conserving (CC) subcategories are defined by allowing mixing only  between words that have the same letters, possibly  in different order, while additive charge-conserving (ACC) subcategories allow  mixing between different compositions with the same total charge. The CC-with-glue construction introduced here lies between these two cases: it extends CC by permitting non-zero morphisms only between compositions that are comparable with respect to a fixed total order. This extension depends only on symbol counts, not on their positions within words.}\\
\ssa{[I added a brief conceptual overview in the introduction—would you mind letting me know if you think it’s helpful?]}}}

\subsection{Notation} 

We first recall some notation 
as used in \cite{MartinRowell2021}, and some as  in Almateari's 
thesis and paper \cite{Almateari,Almateari26} 
(in turn taken from many sources, raging from \cite{adamek2004abstract} to \cite{Murnaghan}). 
This will also serve to fix some conventions. 

\medskip 

\newcommand{\asin}[1]{}
\newcommand{\nexx}{\medskip \\}

\noindent 
For $N,n \in \N$, we have the following notations.  
\medskip 
\\ 
The set 
$\NN = \{1,2,...,N \}$.
\nexx  
The set 
$
\Lamm{N}{n} \; =\; \{ \lamm \in \N_0^N \; : \;  \lamm.(1,1,...,1)\;  =n   \}
.$  
\nexx  
The function $\; \ff: \U{N}^n \rightarrow \Lamm{N}{n} \;$ is given 
by setting $\; \ff(w)_i \;=\; |\{ j | w_j = i \}|$. 
\nexx 
$\MAT$ is the monoidal category of matrices (over a fixed ground field - we can take it to be $\C$). Hence the object set is $\N$ and the monoidal product is multiplication on objects and Kronecker product on morphisms. 
\nexx 
$\MAT^N$ is the full monoidal subcategory of $\MAT$ generated by the object $N$; 
where the object $N^i$ is renamed as $i$, so the object monoid becomes $(\N_0,+)$ 
rather than $(N^{\N_0}, \times)$. 
Rows and columns of matrices in $\MAT^N(i,j)$ are labelled by $\NN^i$ and $\NN^j$ respectively\asin{MAT^N}. 
\nexx 
$\MATCH^N$ is the monoidal subcategory of $\MAT^N$ of CC matrices - matrices $R$ such that $R_{vw} \neq 0$ implies $\ff(v) = \ff(w)$\asin{monoisubcat}. 
\nexx 
$\MATCHA^N$ (sometimes written $\MATCHa^N$) is the monoidal subcategory of $\MAT^N$ of ACC matrices (additive CC matrices) - matrices $R$ such that $R_{vw} \neq 0$ 
implies $\Sigma_i w_i = \Sigma_i v_i$. 
(Here we refer to these categories but do not use them. 
See e.g. \cite{Almateari} for proof that this restriction on morphisms gives a monoidal category.)

\ignore{{
\newcommand{\lav}{\lambda}
\newcommand{\mw}{\mu}
\newcommand{\off}{\overline{\ff}}
\newcommand{\unk}[2]{\raisebox{-5pt}{${#1 \atop #2}$}}
\newcommand{\unkk}[2]{\raisebox{-5pt}{$\overset{\scalebox{1.02}{$#1$}}{\scriptscriptstyle{#2}}$}} 
\newcommand{\unkkk}[2]{\raisebox{-4pt}{\begin{array}{c} #1 \\ {}^{#2}\end{array}}}
}}

\section{Charge-conservation with glue}  

In this section we generalise Almateari's methods from \cite{Almateari} to introduce a new strict monoidal 
category, again intended, 
like the categories $\MATCH^N$ and $\MATCHA^N$, 
as a target in representation theory.

\medskip 

\subsection{Introduction, motivation and preliminaries} 

Note that the categories  $\MATCH^N$ 
(from \cite{MartinRowell2021})
and $\MATCHA^N$ coincide for $N=2$, and that matrices of this type
do not exhaust the types seen in Hietarinta's rank-2 braid representation classification \cite{Hietarinta92}. 
A type much closer to including all the varieties in the classification is given by the `CCwg' matrices to be described below. 
As shown in Almateari's paper, 
the generic ansatz for a braid representation in ACC has a lot more parameters than CC. In this sense ACC is a big step up in difficulty from the solved CC case \cite{MartinRowell2021}, and indeed the advantageous calculus of CC does not have a directly similarly strong lift to ACC, so that after $N=3$ the all-higher-ranks ACC braid rep classification problem remains open. Like ACC, CCwg also extends CC, but a vestige of the CC calculus is retained, so it offers the twin advantages of both containing a more-complete transversal in rank-2; and more tools towards a potential all-rank solution. 
The analogues of non-semisimplicity are not direct, or even canonical, in higher representation theory, but CCwg can also be seen as a kind of generalisation to allow a form of higher non-semisimplicity, as we shall see.

\medskip 

In \S\ref{ss:constructCCwg} we construct the new category. 
In \S\ref{ss:glue-calculus} we aim to get as close as possible, in the 
(necessarily rigidified) 
higher representation theory setting, to the utility provided in the ordinary artinian non-semisimple representation theory setting by \ppmm{Wedderburn's Theorem} (see e.g. \cite{Cohn2003}), 
which uses the radical to bridge from non-semisimplicity to the Artin-Wedderburn Theorem and hence semisimple  structure. 

\ppmm{The utility of ideals (hence classical radicals) is limited in strict monoidal category representation theory, but in cases where all morphism sets are algebras we can bring ordinary ideal structure piecemeal ... We show in this sense that glue is radical ... }

\medskip \medskip



\ignore{{
In this section we use Almateari's methods to introduce a new strict monoidal 
category, again intended as a target in representation theory.
(Note that $\MATCH^N$ and $\MATCHA^N$ coincide for $N=2$, and that matrices of this type
do not exhaust the types seen in Hietarinta's rank-2 braid representation classification \cite{Hietarinta92}. 
A type much closer to including all the varieties in the classification is given by the `CCwg' matrices to be described below. As shown in this paper, the generic ansatz for a braid representation in ACC has a lot more parameters than CC. In this sense ACC is a big step up in difficulty from the solved CC case, and indeed the advantageous calculus of CC does not have a directly similarly strong lift to ACC, so that after $N=3$ the all-higher-ranks ACC braid rep classification problem remains open. Like ACC, CCwg also extends CC, but a vestige of the CC calculus is retained, so it offers the twin advantages of both containing a more-complete transversal in rank-2; and more tools towards a potential all-rank solution. 
The analogues of non-semisimplicity are not direct, or even canonical, in higher representation theory, but CCwg can also be seen as a kind of generalisation to allow a form of higher non-semisimplicity, as we shall see.)

\medskip 

We first recall some notation from the paper.

For $N,n \in \N$, 

$\NN = \{1,2,...,N \}$

$
\Lamm{N}{n}  = \{ \lamm \in \N_0^N \; : \;  \lamm.(1,1,...,1)\;  =n   \}
$ 
as in \ref{de:Lamm}. 

The function $\ff: \U{N}^n \rightarrow \Lamm{N}{n}$ is given in \ref{de:ff}.
}} 

\newcommand{\MATCHwg}{\MATCH_g}
\newcommand{\rg}[1]{\textcolor{orange}{#1}}
\newcommand{\rgg}[1]{\textcolor{red}{#1}}

\subsection{The main construction: the categories $\MATCHwg^N$}
\label{ss:constructCCwg}
$\;$ 

Here we give the construction, summarised by the well-definedness theorem, for the CC-with-glue categories $\MATCHwg^N$ - in (\ref{th:CCwg}). 
We need various inputs, starting with an order on words.

\mdef 
Revlex order example (order in the case $\U{3}^4 \rightarrow \Lamm{3}{4}$):
\[
\begin{array}{cccccccccccccc}
w    &    1111 & 2111 & 3111 & 1211 & 2211 & 3211 & 1311 & 2311 & 3311 & 1121 & ... 
\\
\ff(w) &    \underline{400}  & \underline{310}  & \underline{301} &  310  & \underline{220}   & \underline{211} &  301 & 211 & \underline{202} & 310 & ...
\end{array}
\]
In the $\ff(w)$ row we have underlined the first instance of each composition in this order.

Observe that 
the 
(revlex) 
first instance of 
a word $w \in \NN^n$ with 
$\ff(w) = \; \lambda \; =\; \lambda_1 \lambda_2 ...\lambda_N$ is the orbit element with 
weakly decreasing $w_i$:
\beq  \label{eq:orbrep}
\overline{\ff}(\lambda) \;  := \; 
\underbrace{N,N,...,N}_{\lambda_N \; copies}, 
 \underbrace{N\!-\!1, N\!-\!1, ..., N\!-\!1}_{\lambda_{N\!-\!1}},...,
 \underbrace{11,,1}_{\lambda_1} 
\eq

\mdef   \label{de:Lammo}
Fix $N \in \N$. For $n \in \N$ give an order $(\Lamm{N}{n}, <)$
by ordering $\NN^n$ in revlex, apply $\ff$, then $\lambda < \mu$ if $\lambda$ first appears 
before $\mu$ in sequence $\ff(\NN^n)$. 
\\
(For later use we set $\lambda \in \Lamm{N}{n}$, $\mu \in \Lamm{N}{n'}$ 
incomparable unless $n=n'$.)

\mdef   \label{de:CCwg}
We say  a matrix $M \in \MAT^N$ is {\em rank-$N$ charge-conserving-with-glue} 
(CCwg)
if $\langle w | M | v \rangle \neq 0 $ implies $\ff(w) \leq \ff(v)$
(that is, $\ff(w) > \ff(v)$ implies $\langle w | M | v \rangle = 0 $).
\\
(By the incomparability convention only the zero non-square matrices are CCwg.)

\mdef Examples. Let us write $\MATCHwg^N(n,n)$ for the set of CCwg matrices. Then
\[
\begin{bmatrix} 1 & \rg1 & \rg1 & \rg1 \\ &1&1&\rg1 \\ &1&1&\rg1 \\&&&1\end{bmatrix}
\in \MATCHwg^2(2,2)
\]
\[
\begin{bmatrix} 
1 & \rg1 & \rg1 & \rg1 & \rg1 &\rg1 &\rg1 &\rg1 &\rg1 \\
  & 1 & \rg1 & 1 &\rg1&\rg1&\rg1&\rg1&\rg1 \\
  &   & 1 & &  \rg1& \rg1&1&\rg1&\rg1 \\
  & 1 & \rg1  &1&\rg1&\rg1&\rg1&\rg1&\rg1 \\
  &&&&1&\rg1&&\rg1&\rg1 \\
  &&&&&1&&1 &\rg1\\
  &&1&&\rg1&\rg1&1&\rg1&\rg1 \\
  &&&&&1&&1&\rg1 \\
  &&&&&&&&1
  \end{bmatrix}
\in \MATCHwg^3(2,2)
\]
(omitted entries are 0). 
Indeed the black entry positions 
correspond to a row $w$ and column $v$ that 
satisfy $\ff(w) = \ff(v)$; 
and the orange satisfy $\ff(w) < \ff(v)$. 
\\ 
Observe that we have a kind of partially-sorted upper-block-triangularity. 

\mdef  \label{de:glue}
In general we call the positions 
$\langle w | M | v \rangle  $
in a matrix $M \in \MAT^N(n,n)$
that satisfy $\ff(w) < \ff(v)$ {\em glue} positions. 
\\
We may call positions that satisfy $\ff(w) = \ff(v)$ {\em CC} 
(charge-conserving)
{positions}. 

\newcommand{\KKK}{\overline{K'}}

\mdef  \label{de:K'K}
For each $N,n,m$ 
we write $K': \MATCHwg^N(n,m) \rightarrow \MATCHwg^N(n,m)$ for the projection 
that sends all glue positions to zero. 
We call this the {\em de-gluing} map. 
We write $\KKK$ for the complementary projection that sends all non-glue 
(i.e. CC) positions to zero. 
Observe that $  M \mapsto K'(M)$ also gives a well-defined map  
$K: \MATCHwg^N(n,m) \rightarrow \MATCH^N(n,m)$. 
Of course all these maps are linear. 
\\ 
We write $\gamma$ for the equivalence relation on $ \MATCHwg^N(n,m)  $ 
given by $a \gamma b$ if $a-b$ is non-zero at most in glue positions. 
Thus  $\MATCH^N(n,m) $ is a transversal of the classes of $\gamma$. 

\medskip

To prove our main theorem 
\ref{th:CCwg}
below we will  need to understand the order 
$(\Lamm{N}{n},>)$ a bit better. 

\ignore{{
\newcommand{\lav}{\lambda}
\newcommand{\mw}{\mu}
\newcommand{\off}{\overline{\ff}}
\newcommand{\unk}[2]{\raisebox{-5pt}{${#1 \atop #2}$}}
\newcommand{\unkk}[2]{\raisebox{-5pt}{$\overset{\scalebox{1.02}{$#1$}}{\scriptscriptstyle{#2}}$}} 
\newcommand{\unkkk}[2]{\raisebox{-4pt}{\begin{array}{c} #1 \\ {}^{#2}\end{array}}}
}}

\mdef   \label{de:Lammoo}
Define {\em an} order  $(\Lamm{N}{n},\prec)$  
by $\lav \prec  \mw$ 
(i.e. sequence $\lav_1 \lav_2 ... \lav_n \;\prec\; \mw_1 \mw_2 ... \mw_n$)
if the first difference from
the left is, for some $i$, $\lav_i \neq \mw_i$, and then $\lav_i > \mw_i$.

\mdef 
Example: 
Consider $\lambda = 53104$ and $\mu=53203 \in \Lamm{5}{13}$.
The first difference is $\lambda_3 = 1 < 2= \mu_3$, so $\lambda \succ \mu$.

To compare with $(\Lamm{N}{n},<)$: 
With $\NN^n$ in revlex order, the first  
instance of a word $w$ with $\ff(w)=$ 53104 $>$ (meaning first instance comes later in revlex order than)  the first instance of a word with $\ff(v)= 53203$.
For 53104 the first instance is the word 
$\off(53104) = 5555322211111$, by (\ref{eq:orbrep}).
For 53203 it is 5553322211111. 
Starting at 5553322211111 we count up, first to 
1114322211111, then 
\\ 2114322211111,
..., 5554322211111, 1115322211111, ..., 5555322211111.
\\
Observe that since early digits change quickest in revlex the right-hand end of 
the word (here 322211111) does not change in this sequence. 
The immediately preceding letter starts at 3 and gradually ascends to 5.

\mdef {\bf Proposition}. \label{pr:dagger}
Fix $N,n\in\N$. 
The order $(\Lamm{N}{n},\prec)$ from (\ref{de:Lammoo}) is our order 
$(\Lamm{N}{n}, <)$
from (\ref{de:Lammo}).

\medskip

\noindent {\em Proof}. 
Put $\NN^n$ in revlex order. 
Keep in mind (\ref{eq:orbrep}).
So suppose $\lambda \prec \mu$, and compare words $ \off(\lambda) , \off(\mu) $. 
The compositions are the same up to $\lambda_{i-1}$ for some $i$ 
(and then $\lambda_i > \mu_i$) 
so both words 
end in the same descending pattern 
$$
\underbrace{i,i,\ldots,i}_{\mu_i},
\underbrace{i-1,i-1,\ldots,i-1}_{\lambda_{i-1}},i-2,\ldots,
\underbrace{1,1,\ldots,1}_{\lambda_1}.
$$
Note that the immediately preceding letter is $i$ for  $\off(\lambda)$ and $>i$ for $\mu$
(as in the 53104 example). But of course here this digit is gradually ascending in revlex, so $\lambda<\mu$.
\qed 



\mdef 
For $v \in \NN^n$ we set $\#_i(v) = \ff(v)_i$, the number of occurrences of symbol $i$ in $v$.  
$\;$ 
For example $\#_2 (1121) = \ff(1121)_2 =  1$.

\mdef 
Consider  $v =  v_1 v_2 \cdots v_n v_{n+1} \cdots v_{n+m}
\in \NN^{n+m}$.  
We denote by  
${\unkk{v}{n-}} \in \NN^n$ the subword that is the first $n$ symbols of $v$, 
and by 
$\unkk{v}{-m} \in \NN^m $ the last $m$ symbols,
so that 
$\unkk{v}{n-} \unkk{v}{-m} =v$.

\mdef {\bf Proposition}. \label{pr:ddagger}
Consider  $v,w \in \NN^{n+m}$.   
\\ 
(I) If $\ff(v) < \ff(w)$ then  
\ignore{{
$$
\ff(v_1, v_2, ..., v_n, v_{n+1},..., v_{n+m})
\; < \;  
\ff( w_1, w_2, ...,w_n,w_{n+1},...,w_{n+m} )
$$
%
We have 
${\unkk{v}{n-}}, 
{\unkk{w}{n-}} \in \NN^n$,
$\unkk{v}{-m},\unkk{w}{-m} \in \NN^m $,
the indicated 
projections to subwords 
so that $\unkk{v}{n-} \unkk{v}{-m} =v$ (resp. for $w$).
}}
either $\ff(\unkk{v}{n-}) < \ff(\unkk{w}{n-})$ or 
$\ff(\unkk{v}{-m})<\ff(\unkk{w}{-m})$, or both. 
\\
(II) If $\ff(v) = \ff(w)$ then either 
$\ff(\unkk{v}{n-}) = \ff(\unkk{w}{n-})$ {\em and} 
$\; \ff(\unkk{v}{-m}) = \ff(\unkk{w}{-m})$ 
or both are inequalities and in opposite directions.
\\
(III) If $\ff(v) > \ff(w)$ then at least one of 
$\ff(\unkk{v}{n-}) > \ff(\unkk{w}{n-})$  and 
$\; \ff(\unkk{v}{-m}) > \ff(\unkk{w}{-m})$ 
holds. 

\medskip  

\noindent 
{\em Proof}. 
Of course 
$\ff(\unkk{v}{n-}) +\ff(\unkk{v}{-m}) = \ff(v)$ for any $v \in \NN^{n+m}$.
\\
(I)
Suppose for a contradiction that both  
$\ff(\unkk{v}{n-}) \geq \ff(\unkk{w}{n-})$  and 
$\ff(\unkk{v}{-m}) \geq \ff(\unkk{w}{-m})$.
The case both equal gives $\ff(v)=\ff(w)$ --- a contradiction.
So then 
by (\ref{de:Lammoo},\ref{pr:dagger})
there is a lowest $i$ so that $\#_i (\unkk{v}{.-.}) > \#_i(\unkk{w}{.-.})$
on at least one `side', i.e. for $.\!-\!.$ is $n-$ or $-m$.
So then 

\beq  \label{eq:hashsquarej}
\begin{array}{ccccc} 
\#_j (\unkk{v}{n-}) &+ \#_j(\unkk{v}{-m}) =&\#_j(v)    \\
|| & || & || \\
\#_j (\unkk{w}{n-}) &+ \#_j(\unkk{w}{-m}) =&\#_j(w)
\end{array}
\eq 
for $j <i$, and (with at least one $\leq$ strict)
$$
\begin{array}{ccccc} 
\#_i (\unkk{v}{n-}) &+ \#_i(\unkk{v}{-m}) =&\#_i(v)    \\
{\mathsf{V}}| & {\mathsf V}| & {\mathsf V} \\
\#_i (\unkk{w}{n-}) &+ \#_i(\unkk{w}{-m}) =&\#_i(w)
\end{array}
$$
--- a contradiction, by the $\prec$ formulation of $(\Lamm{N}{n},<)$ 
allowed by (\ref{pr:dagger}).
\ignore{{
...
Then these \ppm{ / }
the orbit representatives as in (\ref{eq:orbrep})
 are the same up to $v_{i-1}$ for some $i$ (possibly $i=1$) and then 
$v_i > w_i$.
Now break them both into lengths $n|m$.
\\
If $i\leq n $ then $v_1..v_n < w_1 .. w_n$.
\\
If $i>n$ then $v_{n+1} ..v_i v_{i+1} ..v_{n+m}$ 
is first different from $w_{n+1} ..w_{n+m}$ at $v_i > w_i$,
so $v_{n+1}..v_{n+m} < w_{n+1}..w_{n+m}$.
So, altogether, at least one subword obeys $v_{[]} < w_{[]}$.
\ppm{[-finish this!]}
}}
\\
(II) Suppose $\ff(\unkk{v}{n-}) =\ff(\unkk{w}{n-})$. 
Then the outer two vertical equalities hold in (\ref{eq:hashsquarej}) for all $j$;
and hence the inner equality holds. 
Similarly if   $\ff(\unkk{v}{n-}) > \; \ff(\unkk{w}{n-})$ then there is a lowest $i$ 
where  $\#_i(\unkk{v}{n-} ) < \; \; \#_i(\unk{w}{n-})$, and then  
$\#_i(v) = \#_i(w)$ forces 
$\#_i(\unkk{v}{-m} ) \; > \; \; \#_i(\unkk{w}{-m})$. 
And similarly with the inequalities reversed. 
\\
(III) Similar argument to (I). 
\qed

\medskip 

\mdef {\bf{Theorem}}.  \label{th:CCwg} 
Fix $N \in \N$. 
The subset of CCwg morphisms of $\MAT^N$ gives a strict monoidal subcategory
(hence denoted $\MATCHwg^N$). 

\medskip 

\noindent 
{\em Proof}. 
We require to show closure under the category and monoidal compositions
(observe that the units are clearly present). 
\\ 
For the category composition, 
consider $L,M \in \MAT^N$,  
composable, and in the CCwg subset.
If either is not square then $LM=0$ so there is nothing to check.
So take $L,M \in \MAT^N(n,n)$.
For $w,v \in \N^n$ 
\beq  \label{eq:wLMv}  
\langle w | LM | v \rangle \; 
= \;\; \sum_{u \in \N^n} \langle w | L | u \rangle \; \langle u | M | v \rangle 
\eq 
We require to show that $\ff(w) \not\leq \ff(v)$ (i.e. $\ff(w) > \ff(v)$) implies
$  \langle w | LM | v \rangle =0 $.
Suppose $\ff(w) > \ff(v)$. We have   
\[
\langle w | LM | v \rangle \; 
= \; \sum_{u \in \N^n \over \ff(u)<\ff(w)} 
        \overbrace{\langle w | L | u \rangle}^{=0} \langle u | M | v \rangle 
  + \sum_{u \in \N^n \over \ff(u) \geq \ff(w)} 
      \langle w | L | u \rangle \overbrace{\langle u | M | v \rangle}^{=0}
      \;\; = \; 0
\]
as required, where the first indicated factor is 0 by the CCwg condition on $L$;
and the second because $\ff(u) \geq \ff(w) >\ff(v)$ implies $\ff(u) > \ff(v)$ here. 
\\
For the monoidal composition, take $L \in \MAT^N(n,n')$, $M\in\MAT^N(m,m')$.
If $n\neq n'$ or $m\neq m'$ then $L\otimes M=0$ so there is nothing to check, 
so take 
$n=n'$, $m=m'$. 
For $w,v\in \NN^{n+m}$
we require to show $\langle w | L \otimes M |v\rangle =0$ if $\ff(w)>\ff(v)$.
Suppose $\ff(w) > \ff(v)$.
Recall we have 
\beq  \label{eq:kronekerid} 
\langle  {{\scalebox{1.02}{$w$}}}   | L \otimes M |      {v} \rangle  
\; = \; 
  \langle \unkk{w}{n-} | L  |\unkk{v}{n-}\rangle  \;   \langle \unkk{w}{-m} |  M | \unkk{v}{-m}\rangle
\eq 
It is enough to show   
one of 
$\ff( \unkk{w}{n-} ) > \ff(\unkk{v}{n-})$ or  
$\ff( \unkk{w}{-m} ) > \ff(\unkk{v}{-m})$
given $\ff(w) > \ff(v)$.
\ignore{{
For this we need to understand a property of the $(\Lamm{N}{n},>)$ order as 
in (\ref{pr:dagger}) and (\ref{pr:ddagger}).


\mdef (Back to the proof of (\ref{th:CCwg})) 
}}
We indeed have this from (\ref{pr:ddagger}), so that one or other factor in (\ref{eq:kronekerid}) vanishes here, hence the entry vanishes as required.
\qed

\mdef {\bf Proposition}. 
Let $N \in \N$. The category $\MATCH^N$ is a monoidal subcategory of $\MATCHwg^N$. 
\medskip

\noindent 
{\em Proof}. 
The CC condition defining $ \MATCH^N$ can be expressed as $R_{vw} \neq 0$ implies $\ff(v) = \ff(w)$.
\qed

\mdef    \label{pa:2radical}
Note the following from (\ref{pr:ddagger}) and the formulation in the proof 
of Thm.\ref{th:CCwg}
above. 
\medskip 

\noindent 
{\bf Lemma}. 
Let 
$L \in \MATCHwg^N(n,n)$, $M\in\MATCHwg^N(m,m)$, 
and $w,v \in \NN^{n+m}$. 
\medskip 
\\
(A) If $\ff(w) =\ff(v)$ then 
\\
(I) 
if $\ff(\unkk{w}{n-}) = \ff(\unkk{v}{n-})$ then 
$\ff(\unkk{w}{-m}) = \ff(\unkk{v}{-m})$ and both factors in  
$ \langle w | L \otimes M |v\rangle $  
from (\ref{eq:kronekerid})
come from 
`CC entries' in $L$ and $M$. 
$\;$ 
On the other hand 
\\
(II) 
if $\ff(\unkk{w}{n-}) \neq  \ff(\unkk{v}{n-})$
then one of the factors is `glue', and the other is zero.

\medskip 

\noindent (B)
And on the other hand if $\ff(w) < \ff(v)$ (i.e. we are looking at a matrix entry 
$ \langle w | L \otimes M |v\rangle $ 
in a glue position) then at least one factor 
comes from a `glue entry' in $L$ or $M$. 
\qed 

\medskip

\subsection{Towards a glue calculus for representation theory}  
\label{ss:glue-calculus}
$\;$ 

Let us now consider the generic glue ansatz for an $R$-matrix
- extending the generic charge-conserving ansatz solved to classify braid representations $F:\Bcat\rightarrow\MATCH^N$ in \cite{MartinRowell2021}. 
(Or indeed for representations of generators in any other such strict monoidal category.)
We want to establish properties of the constraints that depend on whether a parameter is glue (i.e. allowed non-zero in CCgw but not allowed in CC) or non-glue. 

For example 
\beq    \label{eq:Morange}
D=\begin{bmatrix}
    \alpha & \rg \beta \\ & \gamma
\end{bmatrix}  \in \MATCHwg^2(1,1),  \hspace{1cm}
M=
\begin{bmatrix} 
a & \rg b & \rg c & \rg d \\ 
&f&g&\rg h \\ 
&k&l&\rg m \\
&&& r
\end{bmatrix}
\in \MATCHwg^2(2,2)
\eq
has the glue coloured \rg{orange}.

The basics:
\[
\langle 11 | M  = [1,0,0,0]M = [a,b,c,d] ,
\hspace{1cm} 
\langle 22 | M  = [0,0,0,1]M = [0,0,0,r] 
\]
\[
\langle 22 | M | 12 \rangle \; = \; [0,0,0,1] \; M\begin{bmatrix}
    0 \\ 0 \\ 1 \\ 0
\end{bmatrix} = 0 
\]
- consistent with  
$\ff(22) > \ff(12) $ 
as in (\ref{de:CCwg}). 

Meanwhile for example
\[
D \otimes D = \begin{bmatrix}
    \alpha \alpha & \alpha \rg\beta & \rg\beta \alpha & \rg{\beta\beta} \\
                  & \alpha\gamma   &           &  \rg\beta \gamma \\
                  &                & \gamma\alpha &\gamma\rg\beta \\
                  &             &                & \gamma\gamma 
\end{bmatrix}
\]
\[
M\otimes M \; = \; \left[ \begin{array}{cccc|cccc|cccc|cccc}
    aa&a\rg b&a \rg c&a\rg d &\rg{b}a &\rg{bb} &\rg{bc}&\rg{bd} & ... \\
      & af   & ag    &a\rg{h}&        &\rg{b}f &\rg{b}g&\rg{bh} &  ...\\
      & ak   & al    &a\rg{m}&        &\rg{b}k &\rg{b}l&\rg{bm} & ... \\
      &&&ar&     &&&\rg{b}r & ...  \\ \hline 
      &&&&fa& f\rg{b} &f\rg c& f\rg d& ... \\
      &&&&&ff& fg&f\rg h& ... \\
      &&&&&fk&fl&f\rg m& ... \\
      &&&&&&&fr& ... \\ \hline
      &&&&ka&k\rg b&k\rg c&k\rg d& ... \\ 
      &&&&&kf&kg  &k\rg h& ... \\
      &&&&&kk&kl&k\rg m& ... \\
      &&&&&&& kr & ... \\ \hline 
      &&&&&&&& \ddots
\end{array}
\right]
\]

\newcommand{\unkkkk}[2]{
\begin{array}{c} #1 \\ {#2}\end{array}
}

\mdef 
Let $L,M \in \MATCHwg^N(n,n)$ and 
consider matrix entries 
$
\langle w| LM |v\rangle  .
$
When $\ff(w)=\ff(v)$ we have 
\[
\langle w | LM | v \rangle \; 
= \; \sum_{\unkkkk{u \in \N^n }{ \ff(u)<\ff(w)}} 
        \overbrace{\langle w | L | u \rangle}^{=0} \langle u | M | v \rangle 
  + \sum_{\unkkkk{u \in \N^n }{ \ff(u) = \ff(w)}} 
      \langle w | L | u \rangle 
      \langle u | M | v \rangle
\] \[  \hspace{1in} 
  + \sum_{\unkkkk{u \in \N^n }{ \ff(u) > \ff(w)}} 
      \langle w | L | u \rangle 
      \overbrace{
      \langle u | M | v \rangle
      }^{=0}
\]
from which we see that we get contributions only from the strictly CC entries in $L,M$. 
We deduce the following. 


\mdef  {\bf Proposition}.  \label{pr:id-post-glue}
Consider a given set of rank-$N$ CCwg matrices in level-$n$. 
Write $M_1, M_2, ... \in \MATCHwg^N(n,n)$ for these matrices. 
Any identity expressed in terms of the matrices  
$\{ M_i \}$ 
(such as $M_1 M_2 M_1 = M_2 M_1 M_2$) 
and obeyed by
them is also obeyed by the matrices 
$K(M_i)$
obtained by projecting out the glue. 
\qed 

\mdef    \label{le:w<v}
Now consider $\ff(w) < \ff(v)$ - we have
\[
\langle w | LM | v \rangle \; 
= \; \sum_{\unkkkk{u \in \N^n }{ \ff(u)<\ff(w)}} 
        \overbrace{\langle w | L | u \rangle}^{=0} \langle u | M | v \rangle 
  + \sum_{\unkkkk{u \in \N^n }{ \ff(u) = \ff(w)}} 
      \langle w | L | u \rangle 
      \rg{\langle u | M | v \rangle}
\] \[  \hspace{1in} 
  + \sum_{\unkkkk{u \in \N^n }{ \ff(u) > \ff(w)}} 
      \rg{\langle w | L | u \rangle} 
      \langle u | M | v \rangle
\]
- we see that every such entry contains a contribution with a glue factor 
(coloured \rg{orange})
from $L$ or $M$ or both. 

We deduce the following. 

\newcommand{\Alg}{{\mathsf A}}

\mdef  \label{cth:HoG}
{\bf{Theorem}}. [HoG Theorem.]
(I) Glue is a nilpotent ideal 
- meaning that for each $n$ the glue subset of the algebra $\MATCHwg^N(n,n)$ is a nilpotent ideal. 
Thus for $N,n>1$ the algebra  $\MATCHwg^N(n,n)$  is non-semisimple. 
And furthermore any subalgebra $M$ with 
$dim(M)>dim(K(M))$
(as in (\ref{de:K'K}))
is non-semisimple.
\\
(II) Also if $\rho: \Alg \rightarrow \MATCHwg^N(n,n)$ is a representation of an algebra $\Alg$ then so is 
the de-gluing 
$K\circ \rho$ 
(again as in (\ref{de:K'K})); 
and the image is 
a quotient by {\em part (or all) of} the radical. 
In particular the irreducible factors of $\rho(\Alg)$ and $(K\circ\rho)(\Alg)$ are the same. 
 \medskip   \\ 
\ppmm{{\em Proof}.  (I) We  see from the final term in the expansion in (\ref{le:w<v}) that products of glue elements are glue elements between indices of greater separation in the order (i.e. $\ff(v)>\ff(u)>\ff(w)$). But for any finite $n$ the ordered set is finite, so, iterating, all products with sufficiently many factors vanish. 
\\ 
(II) From (\ref{pr:id-post-glue}), or directly, we have that 
$(K\circ\rho)(ab) = K(\rho(a) \rho(b)) = \; (K\circ\rho)(a) . (K\circ\rho)(b)$. 
\qed }

\mdef Remark. Observe that a representation of an algebra that factors surjectively through a semisimple algebra need not be semisimple (since the radical could be in the kernel). But a rep that factors surjectively through a non-semisimple algebra is necessarily non-semisimple. 

\medskip 

\mdef Next we can turn to the monoidal product. 
In a nutshell, this part of the calculus can be pulled through from (\ref{pa:2radical}). 

\medskip 

There are some questions as to how to present this as an analogue of Jacobson radicals - questions arising from the non-canonical-ness of higher representation theory. 
So rather than exhibiting a grand theory here we will 
limit ourselves to an example application. 

We have shown, for example, 
the following.

\mdef {\bf Corollary}.   \label{co:F_K}
Every braid representation of form 
$F: \Bcat \rightarrow \MATCHwg^N$ `restricts' (cf. e.g. the Wedderburn--Malcev Theorem) 
to a charge-conserving representation. 
That is, writing $\sigma$ for the elementary braid in $B_2$ monoidally generating $\Bcat$, 
so 
$R=F(\sigma)$ 
is the $R$-matrix that 
determines $F$; then 
$K(R)$ is an $R$-matrix determining a charge-conserving representation $F_K$. 
And furthermore the simple content of the ordinary representations $F(B_n)$ and $F_K(B_n)$ is the same for all $n$. 
\ignore{{ 
Consider the functors 
$$
I: \MATCH^N \rightarrow \MATCHwg^N
$$ 
(by inclusion) 
and 
$$
J: \MATCHwg^N \rightarrow \MATCH^N
$$ 
(kill glue entries). 

\ppm{[to finish]}
}}

\medskip 

Next we turn to examples and applications.

\section{Applications I: braid representations}  \label{ss:apps}

\newcommand{\AAA}{{\mathfrak{A}}}  

Given the CC-restriction property  (\ref{co:F_K}) we can 
in principle find all CCwg 
braid representations by formulating an ansatz consisting of a CC representation (from the complete classification) combined with a general glue matrix. Recall that for any rank $N$ and for any such ansatz 
$R \in \MAT(N^2,N^2)$ the {\em braid anomaly} is defined as
\beq   \label{eq:anomaly}
\AAA(R) =  
        R_1 R_2 R_1 -R_2 R_1 R_2 
 \eq 
where $R_1 = R\otimes 1_N$ and $R_2 =  1_N \otimes R$. 
The vanishing of $\AAA(R) $, together with invertibility of $R$, gives us a 
variety (possibly 0)  
of braid representations in glue parameter space. 

Here we will illustrate the method by investigating CCwg braid representations in rank 3, which rank is solved in the CC scheme but open in general. 

The CC classification scheme works up to various natural equivalences, principally local equivalence, which holds for all braid representations, but also up to X-symmetry - conjugation by a diagonal matrix \cite{MartinRowell2021}, which is special to the CC scheme. One thing to determine, therefore, is whether the extension to glue depends on the point taken in the X-orbit. We can already investigate this in rank 2, 
so we do this in \eqref{Xsym analysis}. 

For an extensive discussion of various notions of equivalence, see \cite{MRT25}.  We briefly mention a few here.
In the categorical parlance, local equivalence, i.e. when $R=(A\otimes A)S(A\otimes A)^{-1}$, corresponds to a monoidal natural equivalence.  We will also often refer to this as 'gauge equivalence', as it amounts to a local change of basis.  More generally, a natural equivalence is the same as what we will call $\infty$-equivalence meaning that the representations $\rho^R_n$ and $\rho^S_n$ corresponding to $R$ and $S$ are equivalent.  A finite form of this is $k$-equivalence, in which we demand that $\rho^R_n$ and $\rho^S_n$ are equivalent for all $n\leq k$.

\subsection{Preamble: Braid representations in rank $N=2$}

It will be useful here to recall Hietarinta's classification of varieties of braid representations in rank-2 
(up to a certain notion of equivalence - see later) \cite{Hietarinta92}. 
Excluding the trivial representation Hietarinta effectively gives the following ten cases. 

\newcommand{\Hslashglue}{
\left[ \begin {array}{cccc} k&q&p&s\\ \noalign{\medskip}0&0&k&q
\\ \noalign{\medskip}0&k&0&p\\ \noalign{\medskip}0&0&0&k\end {array}
 \right] 
 }
\newcommand{\Hslash}{
 \left[ \begin {array}{cccc} k&0&0&0\\ \noalign{\medskip}0&0&p&0
\\ \noalign{\medskip}0&q&0&0\\ \noalign{\medskip}0&0&0&s\end {array}
 \right] 
 }

\newcommand{\Haglue}{
 \left[ \begin {array}{cccc} p&0&0&k\\ \noalign{\medskip}0&0&p&0
\\ \noalign{\medskip}0&q&p-q&0\\ \noalign{\medskip}0&0&0&-q\end {array}
 \right]
 }
 
\newcommand{\HslashDS}{
 \left[ \begin {array}{cccc} 0&0&0&p\\ \noalign{\medskip}0&k&0&0
\\ \noalign{\medskip}0&0&k&0\\ \noalign{\medskip}q&0&0&0\end {array}
 \right] 
}

\newcommand{\Hslashgluex}{
 \left[ \begin {array}{cccc} {k}^{2}&-kp&kp&pq\\ \noalign{\medskip}0&0
&{k}^{2}&kq\\ \noalign{\medskip}0&{k}^{2}&0&-kq\\ \noalign{\medskip}0&0
&0&{k}^{2}\end {array} \right]
}
\newcommand{\Hslashglu}{
\left[ \begin {array}{cccc} 1&0&0&1\\ \noalign{\medskip}0&0&-1&0
\\ \noalign{\medskip}0&-1&0&0\\ \noalign{\medskip}0&0&0&1\end {array}
 \right] 
}

\newcommand{\Hising}{
\left[ \begin {array}{cccc} 1&0&0&1\\ \noalign{\medskip}0&1&1&0
\\ \noalign{\medskip}0&-1&1&0\\ \noalign{\medskip}-1&0&0&1\end {array}
 \right]
}
\newcommand{\Height}{
 \left[ \begin {array}{cccc} {p}^{2}+2\,pq-{q}^{2}&0&0&{p}^{2}-{q}^{2}
\\ \noalign{\medskip}0&{p}^{2}-{q}^{2}&{p}^{2}+{q}^{2}&0
\\ \noalign{\medskip}0&{p}^{2}+{q}^{2}&{p}^{2}-{q}^{2}&0
\\ \noalign{\medskip}{p}^{2}-{q}^{2}&0&0&{p}^{2}-2\,pq-{q}^{2}
\end {array} \right]
}

\newcommand{\Hf}{
 \left[ \begin {array}{cccc} {k}^{2}&0&0&0\\ \noalign{\medskip}0&0&kq
&0\\ \noalign{\medskip}0&kp&{k}^{2}-pq&0\\ \noalign{\medskip}0&0&0&{k}
^{2}\end {array} \right]
}
\newcommand{\Ha}{
\left[ \begin {array}{cccc} {k}^{2}&0&0&0
\\ \noalign{\medskip}0&0&kq&0\\ \noalign{\medskip}0&kp&{k}^{2}-pq&0
\\ \noalign{\medskip}0&0&0&-pq\end {array} \right]
}

\newcommand{\Rslash}{R_\backslash}
\newcommand{\Raslash}{R_\slash}

\beq       \label{eq:slash}       \hspace{-.2in}
\Rslash = \Hslash ,  \hspace{1cm}   \Raslash = \HslashDS, 
\eq  
\[ 
R_f = \Hf, \hspace{1cm} 
R_{fI} = \Hslashglu, \hspace{.21cm} 
\] 
\beq   \label{eq:fII}   
\R_{fII}= \Hslashglue, \hspace{.321cm} R_{fIII}=\Hslashgluex, 
\eq 

\[
R_a = \Ha, \hspace{1cm} R_{ag} = \Haglue, \hspace{.2in} 
\]
\[
\Hising, \;\; \Height 
\]
It will be convenient also to have names for the individual varieties here. 
We take most of the names from those used 
in the CC classification \cite{MartinRowell2021}, informed now by the glue machinery. 
The varieties $\Rslash, R_f, R_a$  
are CC on the nose. 
Thus the first is slash type. 
The case $R_a$ 
is a-type; and the 
case $R_f$ 
is f-type - respectively corresponding to representations with $U_qgl(1|1)$ and $U_qsl(2)$ symmetry, 
but the labels a and f refer to {\em antiferromagnetic} and {\em ferromagnetic}, coming from the complete-graph 
characterisation  
of CC representations.  
The 
representation $\Raslash$ above is antislash. 
\ignore{{While $\Raslash$ is not locally equivalent to a CC case, \emph{it is} $\infty$-equivalent, in fact DS equivalent to a specialisation of $\Rslash$.  Indeed one finds that $T\otimes T$ with $T=\begin{bmatrix}
    0 & 1\\1&0
\end{bmatrix}$ commutes with the specialisation $\Rslash(s=k)$ with $s=k$ in $\Rslash$, and hence $T\otimes I\Rslash(s=k) (T\otimes I)^-1$ is another YBO.  We see that this is $\Raslash$.  Now DS equivalence implies $\infty$-equivalence via $T_n=I\otimes T\otimes T^2\cdots \otimes T^{n-1} $ (see \cite{MRT2}).
}}
The  
cases $R_{fI}, R_{fII}, R_{fIII}$ are  f-glue-I to III, since they are all glue representations that project/de-glue onto various parts of the f variety. 
In fact they do so at points where the f variety intersects the slash variety, so there is choice in the names here.

The 
case $R_{ag}$
is `a-glue', by the same logic. 
The last two we call Ising and 8-vertex respectively. These two do not fit in the glue scheme. 

\mdef 
Note that 
while $\Raslash$ is not locally equivalent to a CC case, \emph{it is} $\infty$-equivalent, in fact DS equivalent to a specialisation of $\Rslash$.  Indeed one finds that $T\otimes T$ with $T=\begin{bmatrix}
    0 & 1\\1&0
\end{bmatrix}$ commutes with the specialisation $\Rslash^{s=k}$ with $s=k$ in $\Rslash$, and hence $(T\otimes I)\Rslash^{s=k} (T\otimes I)^{-1}$ is another YBO.  We see that this is $\Raslash$.  Now DS equivalence implies $\infty$-equivalence via $T_n=I\otimes T\otimes T^2\cdots \otimes T^{n-1} $ 
(see \cite{MRT25}).

\mdef A computer calculation \cite{Maple} shows that the 8 vertex case is $3$-equivalent to a CC case--the $R_a$ case with $2$ eigenvalues of multiplicity $2$.  Does this extend to an $\infty$-equivalence?

\medskip

Next we consider specifically how the varieties above are directly related, i.e. their intersections. 

\mdef 
Observe that if we put $k=1$ and $p=q=-1$ then $R_{f}$ collapses to the de-glued version of $R_{fI}$. 
If  we put  $p=q=k$ then $R_{f}$ collapses to the de-glued version (or the glue free subvariety) of $R_{fII}$ up to overall scalar; and 
to the de-glued/glue-free $R_{fIII}$ on the nose. 
(Caveat: variable names $p,q$ and so on are not coordinated at these intersections.)

\medskip 
Under X-symmetry we can transform $R_a$ to 
\[  R_a' =   \left[ \begin{array}{cccc} k^2 \\ & 0& k^2 \\ & pq & k^2-pq \\ &&& -pq\end{array} \right]  \]
which depends on $p,q$ only through $pq$, and only on $k$ through $k^2$. 
If we put $k^2 = p'$ and $pq=q'$ then $R_a$ collapses to the de-glued $R_{ag}$ 
(with parameters disambiguated by replacing the parameters in $R_{ag}$ with primed versions). 


\mdef 
With extension to higher ranks in mind, 
it is an intriguing question what a-glue looks like in the sense of solutions to the glue ansatz extending the original formulation of $R_a$. And indeed through the orbit of the X-symmetry. 
For now it will be enough to consider the part of the orbit given by 
\beq   \label{eq:Rag} 
R_{ag}^x = 
\left[\begin{array}{cccc}
a  & p  & q  & r  
\\
 0 & 0 & a x  & s  
\\
 0 & -\frac{b}{x} & a +b  & t  
\\
 0 & 0 & 0 & b  
\end{array}\right]
\eq 
(enough in the sense that X symmetry does not change the CC part in the 11 and 22 channels). 
Note that this covers $R_a$ by putting $a=k^2$ and $b=-pq$ and then $x=q/k$; and $R_a'$ by putting $x=1$. 

From the anomaly vanishing (and invertibility) we find firstly 
$$
(px+q)(a+b)=0 , \hspace{.3cm} 
(px+q)(ax+b)=0 , \hspace{.3cm} 
(s-tx)(a+b)=0, \hspace{.3cm} 
(s-tx)(ax-b)=0
$$ 
Thus for example if $x \neq 1$ then $q=-px$ and $t=s/x$. 
Putting $q=-px$ and $t=s/x$ we obtain 
\[
s (ax+b) = px   (ax-b)   
\]
so, if $(ax+b) \neq 0$ then $q,s,t$ all depend linearly on $p$. 
Imposing the corresponding identity for $s$ then the anomaly vanishes if 
$
(ax - b) (p^2 x (x^2a -b)   +   r(ax  + b)^2) =0  .
$
Since we assumed $ b \neq -ax$  this determines $r$ linearly in terms of $p^2$, 
unless $b=ax$. 
We have the braid representation
\[
\left[\begin{array}{cccc}
a  & p  & -px  & p^2 x \frac{x^2 a-b}{(ax+b)^2}  
\\
 0 & 0 & a x  & px\frac{ax-b}{ax+b}  
\\
 0 & -\frac{b}{x} & a +b  &   p\frac{ax-b}{ax+b}
\\
 0 & 0 & 0 & b  
\end{array}\right]
\]

On the other hand suppose $x=1$. Then $t=s$ and $q=-p$. 
If $b=a$ then $s=0$ and we get the solution
\[
\left[\begin{array}{cccc}
a  & p  & -p  & r  
\\
 0 & 0 & a   & 0  
\\
 0 & -{a} & 2a   &   0
\\
 0 & 0 & 0 & a  
\end{array}\right]
\]
which is locally equivalent to the equal-eigenvalue case of $R_{ag}$.
If $b \neq a$,  
the analysis depends on whether $b=-a$. 
Let us continue, for now, with a more systematic (if partly less transparent) scheme.

\mdef\label{Xsym analysis} 
We can develop an ansatz for higher ranks, starting with the set up of $R_{ag}^x$ of \eqref{eq:Rag}.  
One readily verifies that 
\[\left[ \begin {array}{cccc} a&0&0&0\\ 0&0&xa&0
\\ 0&-{\frac {b}{x}}&a+b&0\\ 0&0&0
&b\end {array} \right],\left[ \begin {array}{cccc} {\frac {b \left( -t
x+q \right) }{x \left( tx+q \right) }}&-{\frac {q}{x}}&q&-{\frac {-{t}
^{2}{x}^{3}-{t}^{2}{x}^{2}+{q}^{2}x-2\,tqx-{q}^{2}}{4\,xb}}
\\ 0&0&{\frac {b \left( -tx+q \right) }{tx+q}}&tx
\\ 0&-{\frac {b}{x}}&{\frac {b \left( -tx+q \right) 
}{x \left( tx+q \right) }}+b&t\\ 0&0&0&b\end {array}
 \right] \]  \[ \left[ \begin {array}{cccc} a&0&0&r\\ \noalign{\medskip}0&0&-b&-{
\frac {bt}{a}}\\ \noalign{\medskip}0&a&a+b&t\\ \noalign{\medskip}0&0&0
&b\end {array} \right]
,  \left[ \begin {array}{cccc} a&-{\frac {aq}{b}}&q&r
\\ \noalign{\medskip}0&0&b&0\\ \noalign{\medskip}0&-a&a+b&0
\\ \noalign{\medskip}0&0&0&b\end {array} \right] 
\] 
are all R-matrices. 
Using Maple \cite{Maple} on Gr\"obner bases and solving the braid anomaly, we 
find that this list is exhaustive. 

The first solution is already CC, while the second may be gauged to a CC form, by conjugating by $T\otimes T$ where $ T=\left[ \begin {array}{cc} 1&-{\frac {tx+q}{2\,b}}
\\ \noalign{\medskip}0&1\end {array} \right].
$

The third and fourth solutions correspond to a special choice of $x$, namely $x=-a/b$ and $x=a/b$.  These can be gauged, by means of a matrix of the form  $\left[ \begin {array}{cc} 1&y
\\ 0&1\end {array} \right]$ to the following: 

\[ \left[ \begin {array}{cccc} a&0&0&{\frac {4\,{a}^{2}r+a{t}^{2}-b{t}^{
2}}{4\,{a}^{2}}}\\ 0&0&-b&0\\ 0&a&
a+b&0\\ 0&0&0&b\end {array} \right] , \left[ \begin {array}{cccc} a&0&0&-{\frac {a{q}^{2}-4\,{b}^{2}r-b{q}^
{2}}{4\,{b}^{2}}}\\ 0&0&b&0\\ 0&-a
&a+b&0\\ 0&0&0&b\end {array} \right].
\]
Since $r,t,q$ are arbitrary real numbers, we may choose $t=q=0$ to obtain the solutions of the form $R_{ag}$. In fact, as long as the $(1,4)$ entry is non-zero, we may gauge that entry to be $1$.  Indeed, here the gauge choice is a diagonal matrix. Moreover, having done this, we can make the two solutions themselves gauge equivalent (simply changing the sign) and the $a\leftrightarrow b$ symmetry then yields the single non-CC representative 

\[R_{a,b}:=\left[ \begin {array}{cccc} a&0&0&1\\ 0&0&-b&0\\ 0&a
&a+b&0\\ 0&0&0&b\end {array} \right].\]

\mdef When $a\neq b$, the above solution $R_{a,b}$ is diagonalisable.  The representation theory becomes more subtle when $a=b$, in which case the Jordan form has 2 $2\times 2$ blocks.  For the purposes of analysing this case it is also convenient (and possibly sufficiently generic) to set $a=b=1$--the unipotent case.
\subsection{The parity property} 

It will be useful in representation theory to exploit a simple and purely mechanical application of the machinery in 
\S\ref{ss:constructCCwg}. 
Observe that while for the CC construction it is necessary only that the symbols labelling rows and columns 
in $\MATCH^N(1,1)$ are distinct - i.e. the construction is symmetrical under the action of $\Sym_N$ permuting these labels, in $\MATCH_g^N$ the symbols must at least be totally ordered. 
(In a sense this was already true  in $\MATCH^N$ since we order the basis before writing out matrices; but the construction overall has the $\Sym_N$ symmetry.) 
This can be compared with $\MATCHA^N$, where the construction even makes use of the numerical values of the symbols $\{ 1,2,...,N\}$. 
Now let us say that two words in $\underline{N}^n$ have the same {\em parity} if their sums have the same parity (for example 123 and 222 have the same parity; and 11 and 22 have the same parity; but 11 and 12 do not).  

\mdef  \label{de:parp}
Fix $N$. Say a matrix $M$ in $\MAT^N$ is parity preserving if $M_{vw}\neq 0$ implies 
that words $v,w$ have the same parity. 
\\
Examples: 
\\
(I) CC matrices are parity preserving. 
\\ 
(II) every matrix in Hietarinta's rank-2 transversal (recalled above) is parity preserving except for $R_{fII}$ and $R_{fIII}$. 

\mdef  \label{lem:parity}
{\bf{Proposition}}.  
The product of parity preserving matrices is parity preserving. 
The Kronecker product of parity preserving matrices is parity preserving. 
\medskip 
\\
{\em Proof}. Consider composable matrices $L,M$ as in (\ref{eq:wLMv}). 
Suppose words $w,v$ have different parities. Then for each summand either $u$ has different parity to $w$, or to $v$, 
so the contribution to the product is 0, as required.
Now consider the Kronecker, as in (\ref{eq:kronekerid}). 
Suppose WLOG that $w$ is odd and $v$ even. 
If the $n$ part of $w$ is odd then (the $m$ part is even and) if the $n$ part of $v$ is even the first factor
vanishes and there is nothing to show, so suppose the $n$ part of $v$ is odd - then the $m$ part is odd and the second factor vanishes.
\qed

\medskip 

We will apply this repeatedly in what follows.  See for example \S\ref{ss:casediamond}.

\subsection{Braid representations in rank $N=3$}   \label{ss:repsN3}    $\;$  

In this section we address the extension of the classification of CC braid representations to the with-glue case. 
The strategy  is to use the theorem ~\ref{cth:HoG}
above and hence start with CC reps and extend from there, searching, in the first instance, using 
computer algebra to refine the general with-glue ansatz.

\subsubsection{The topography of a glue matrix}   $\;$  

A preliminary technical question is suitable names for the variables in the glue positions. 
In rank 2 there are five such, for now we can call them $p,q,r,s,t$ 
as for example in (\ref{eq:Rag}). 
Then the glue components corresponding to these by restriction in each of the three restrictions of 123, to 12, 13 and 23, can be 
$p_{12},q_{12},r_{12},s_{12},t_{12}$ and so on. 
We have (marking the CC positions simply with a 1):

\[  
\ignore{{
\begin{bmatrix} 
1 & \rg1 & \rg1 & \rg1 & \rg1 &\rg1 &\rg1 &\rg1 &\rg1 \\
  & 1 & \rg1 & 1 &\rg1&\rg1&\rg1&\rg1&\rg1 \\
  &   & 1 & &  \rg1& \rg1&1&\rg1&\rg1 \\
  & 1 & \rg1  &1&\rg1&\rg1&\rg1&\rg1&\rg1 \\
  &&&&1&\rg1&&\rg1&\rg1 \\
  &&&&&1&&1 &\rg1\\
  &&1&&\rg1&\rg1&1&\rg1&\rg1 \\
  &&&&&1&&1&\rg1 \\
  &&&&&&&&1
  \end{bmatrix}
\;\;  \leadsto \;\;
}}
  \begin{bmatrix} 
1 & \rg{p_{12}} & \rg{p_{13}} & \rg{q_{12}} & \rg{r_{12}} &\rgg{p_{1123}} &\rg{q_{13}} &\rgg{p_{1132}} &\rg{r_{13}} \\
  & 1 & \rgg{p_{1213}} & 1 &\rg{s_{12}}&\rgg{p_{1223}}&\rgg{p_{1231}}&\rgg{p_{1232}}&\rgg{p_{1233}} \\
  &   & 1 & &  \rgg{p_{1322}}& \rgg{p_{1323}}&1&\rgg{p_{1332}}&\rg{s_{13}} \\
  & 1 & \rgg{p_{2113}}  &1&\rg{t_{12}}&\rg{p_{2123}}&\rgg{p_{2131}}&\rgg{p_{2132}}&\rgg{p_{2133}} \\
  &&&&1&\rg{p_{23}}&&\rg{q_{23}}&\rg{r_{23}} \\
  &&&&&1&&1 &\rg{s_{23}}\\
  &&1&&\rgg{p_{3122}}&\rgg{p_{3123}}&1&\rgg{p_{3132}}&\rg{t_{13}} \\
  &&&&&1&&1&\rg{t_{23}} \\
  &&&&&&&&1
  \end{bmatrix}
\in \MATCHwg^3(2,2)
\]
The naming and colour coding here is  
black for CC; orange for glue between two charge sectors; red for glue between three. 
However, in the end we use simply $m_{i,j}$ for the glue positions in each ansatz below.

\subsubsection{Recalling the rank-3 CC classification}

In rank-3 we have, from \cite{MartinRowell2021}, the following classification of varieties of CC braid representations up to local equivalence. 
The first thing to recall is that rank-3 solutions are `built' from rank-2; and in rank-2 there are four solution types up to orientation, 
denoted /,a,f, and 0. The corresponding R-matrix types can be represented by:
\[
R_{\slash} = \mat a \\ &0&b \\ &c&0 \\ &&&d \tam , \hspace{.2cm} 
R_a = \mat a \\ &a+b&b \\ &-a&0 \\ &&&b \tam, 
\hspace{.2cm} 
R_f = \mat a \\ &a+b&b \\ &-a&0 \\ &&&a \tam, 
\hspace{.2cm} R_0 = Id_4 
\]
In rank-$N$ one codifies solutions using the complete graph $K_N$. 
One thinks of attaching a rank-2 solution to each edge. Thus in rank-3 we need three rank-2 solutions as input. 
And thus solutions are codified by triples from /,a,f,0. 
Some combinations are impossible. In summary we have: 
\\
///
\\
a//, \;  f//, \;  0//
\\
aa0, \; fff,\; , ff0, \;   f00, \;  aaf, \;  000\\

In \cite{MartinRowell2021} there are fairly detailed analyses of R-matrix spectra in all these cases. Here we will focus, for now, on a couple of interesting cases from our extension perspective.

\subsubsection{Braid representations extending the CC type aa0}

Recall that in the end we use simply $m_{i,j}$ for the glue positions. An example braid rep 
variety extending aa0 is, where we have used $m_i$ for the glue variables:

\ignore{{\[ \hspace{-5.4852cm} R=
\left[\begin{array}{ccccccccc}
a  & 0 & 0 & 0 & \frac{b m_{3,8}^{2}}{a^{2}}-\frac{m_{3,8}^{2}}{a} & -\frac{2 b m_{3,8}^{2} m_{4,7}}{a^{3}}+\frac{m_{3,8}^{2} m_{4,7}}{a^{2}}-\frac{m_{3,8} m_{4,8}}{a} & 0 & \frac{m_{3,8}^{2} m_{4,7}}{a^{2}}\!+\!\frac{m_{3,8} m_{4,8}}{a} & \frac{b m_{3,8}^{2} m_{4,7}^{2}}{a^{4}}-\frac{m_{3,8}^{2} m_{4,7}^{2}}{a^{3}} 
\\
 0 & a\! +\!b  & -\frac{b m_{4,7}}{a} & a  & -2 m_{3,8} & -\frac{b m_{3,8} m_{4,7}}{a^{2}}+\frac{m_{3,8} m_{4,7}}{a}-m_{4,8} & -m_{4,7} & \frac{3 m_{3,8} m_{4,7}}{a} & \frac{b m_{3,8} m_{4,7}^{2}}{a^{3}}-\frac{2 m_{3,8} m_{4,7}^{2}}{a^{2}}+\frac{m_{4,7} m_{4,8}}{a} 
\\
 0 & 0 & a  & 0 & 0 & -m_{3,8} & 0 & m_{3,8} & 0 
\\
 0 & -b  & \frac{b m_{4,7}}{a} & 0 & \frac{2 b m_{3,8}}{a} & -\frac{3 b m_{3,8} m_{4,7}}{a^{2}} & m_{4,7} & m_{4,8} & \frac{b m_{3,8} m_{4,7}^{2}}{a^{3}}-\frac{m_{4,7} m_{4,8}}{a} 
\\
 0 & 0 & 0 & 0 & b  & -\frac{2 b m_{4,7}}{a} & 0 & 2 m_{4,7} & \frac{b m_{4,7}^{2}}{a^{2}}-\frac{m_{4,7}^{2}}{a} 
\\
 0 & 0 & 0 & 0 & 0 & 0 & 0 & a  & 0 
\\
 0 & 0 & 0 & 0 & 0 & -\frac{b m_{3,8}}{a} & a  & \frac{b m_{3,8}}{a} & 0 
\\
 0 & 0 & 0 & 0 & 0 & -b  & 0 & a +b  & 0 
\\
 0 & 0 & 0 & 0 & 0 & 0 & 0 & 0 & a  
\end{array}\right]
\]
}}

\[\hspace{-1.4852cm}
\left[
\footnotesize{\begin{array}{ccccccccc}
a & 0 & 0 & 0 &
\dfrac{b m_1^2}{a^2}-\dfrac{m_1^2}{a} &
-\dfrac{2 b m_1^2 m_2}{a^3}+\dfrac{m_1^2 m_2}{a^2}-\dfrac{m_1 m_3}{a} &
0 &
\dfrac{m_1^2 m_2}{a^2}+\dfrac{m_1 m_3}{a} &
\dfrac{b m_1^2 m_2^2}{a^4}-\dfrac{m_1^2 m_2^2}{a^3}
\\
0 & a+b & -\dfrac{b m_2}{a} & a &
-2 m_1 &
-\dfrac{b m_1 m_2}{a^2}+\dfrac{m_1 m_2}{a}-m_3 &
- m_2 &
\dfrac{3 m_1 m_2}{a} &
\dfrac{b m_1 m_2^2}{a^3}-\dfrac{2 m_1 m_2^2}{a^2}+\dfrac{m_2 m_3}{a}
\\
0 & 0 & a & 0 & 0 & -m_1 & 0 & m_1 & 0
\\
0 & -b & \dfrac{b m_2}{a} & 0 &
\dfrac{2 b m_1}{a} &
-\dfrac{3 b m_1 m_2}{a^2} &
m_2 & m_3 &
\dfrac{b m_1 m_2^2}{a^3}-\dfrac{m_2 m_3}{a}
\\
0 & 0 & 0 & 0 &
b &
-\dfrac{2 b m_2}{a} &
0 & 2 m_2 &
\dfrac{b m_2^2}{a^2}-\dfrac{m_2^2}{a}
\\
0 & 0 & 0 & 0 & 0 & 0 & 0 & a & 0
\\
0 & 0 & 0 & 0 & 0 &
-\dfrac{b m_1}{a} &
a & \dfrac{b m_1}{a} & 0
\\
0 & 0 & 0 & 0 & 0 & -b & 0 & a+b & 0
\\
0 & 0 & 0 & 0 & 0 & 0 & 0 & 0 & a
\end{array}}
\right]
\]

This gives  a multidimensional variety of extended solutions over every representative aa0 solution.  However, generically the presence of glue here can be gauged away by means of a local basis change--the result is the charge conserving shadow obtained by tuning all glue to $0$.

Another example, with $a=b$, is \[\left[ \begin {array}{ccccccccc} b&0&0&0&0&-m_{{1}}&0&m_{{1}}&0
\\ \noalign{\medskip}0&2\,b&-m_{{3}}&b&-2\,m_{{2}}&-m_{{4}}&-m_{{3}}&3
\,{\frac {m_{{3}}m_{{2}}}{b}}&-m_{{5}}\\ 0&0&b&0&0&-
m_{{2}}&0&m_{{2}}&0\\ 0&-b&m_{{3}}&0&2\,m_{{2}}&-3\,
{\frac {m_{{3}}m_{{2}}}{b}}&m_{{3}}&m_{{4}}&m_{{5}}
\\ 0&0&0&0&b&-2\,m_{{3}}&0&2\,m_{{3}}&0
\\ 0&0&0&0&0&0&0&b&0\\ 0&0&0&0&0&-
m_{{2}}&b&m_{{2}}&0\\ 0&0&0&0&0&-b&0&2\,b&0
\\ 0&0&0&0&0&0&0&0&b\end {array} \right], 
\] which has exactly one eigenvalue.  This variety of solutions has points that cannot be locally gauged to a charge conserving matrix, an example of which is:
\[ \left[ \begin {array}{ccccccccc} 1&0&0&0&0&0&0&0&0
\\ 0&2&0&1&0&0&0&0&-1\\ 0&0&1&0&0&0
&0&0&0\\ 0&-1&0&0&0&0&0&0&1\\ 0&0&0
&0&1&0&0&0&0\\ 0&0&0&0&0&0&0&1&0
\\ 0&0&0&0&0&0&1&0&0\\ 0&0&0&0&0&-
1&0&2&0\\ 0&0&0&0&0&0&0&0&1\end {array} \right]. 
\]
Note that this matrix is unipotent.

\mdef 
We can use the solution above to address the rank-restriction question. 
The 12 part of the representation is 
\[
\left[\begin{array}{ccccccccc}
a  & 0 & 0  & \frac{b m_{3,8}^{2}}{a^{2}}-\frac{m_{3,8}^{2}}{a} 
\\
 0 & a\! +\!b  & a  & -2 m_{3,8} 
\\
 0 & -b  &  0 & \frac{2 b m_{3,8}}{a} 
 \\
 0&0&0&b
\end{array}\right]
\]
which is indeed a braid representation. The 13 part is
\[
\left[\begin{array}{ccccccccc}
a  & 0 & 0  & m 
\\
 0 & a  & 0  & 0 
\\
 0 & 0  &  a & 0 
 \\
 0&0&0&a
\end{array}\right]
\]
which is not a representation. 
From this we see that the very powerful rank-restriction property of CC representations does not extend to glue cases. 

\subsubsection{Braid representations extending the CC type fff, unipotent case} Here we begin with the fff, unipotent solution.
After applying gauge transformations, we arrive at the remarkably simple case, which is exactly the unipotent fff case with a single piece of glue: the $1$ is the (5,9) position.

\[ \left[ \begin {array}{ccccccccc} 1&0&0&0&0&0&0&0&0
\\ 0&2&0&-1&0&0&0&0&0\\ 0&0&2&0&0&0
&-1&0&0\\ 0&1&0&0&0&0&0&0&0\\ 0&0&0
&0&1&0&0&0&1\\ 0&0&0&0&0&2&0&-1&0
\\ 0&0&1&0&0&0&0&0&0\\ 0&0&0&0&0&1
&0&0&0\\ 0&0&0&0&0&0&0&0&1\end {array} \right] 
.\]
This case is perfect for a quick representation-theory analysis, and we report on this in \S\ref{ss:unipotentRT}. 

 \subsubsection{Some $a//$ cases}
 Starting with the generic $a//$ form and adding glue we find that there are interesting solutions.  For concreteness we take the following CC solution 
 \[R:=\left[ \begin {array}{ccccccccc} 1&0&0&0&0&0&0&0&0
\\ 0&2&0&1&0&0&0&0&0\\ 0&0&0&0&0&0
&1&0&0\\ 0&-1&0&0&0&0&0&0&0\\ 0&0&0
&0&1&0&0&0&0\\ 0&0&0&0&0&0&0&1&0
\\ 0&0&1&0&0&0&0&0&0\\ 0&0&0&0&0&1
&0&0&0\\ 0&0&0&0&0&0&0&0&1\end {array} \right].
\]  Note that the Jordan form of $R$ has a single $2\times 2$ block with eigenvalue $1$ and 7 $1\times 1$ blocks, two of which have eigenvalue $-1$ and the remaining $5$ have eigenvalue $1$.  Adding glue and solving we find, for example the following solution: \[ R_g:=\left[ \begin {array}{ccccccccc} 1&0&0&0&4&-2&0&-2&1
\\ 0&2&-1&1&-2&1&0&-2&1\\ 0&0&0&0&0
&0&1&0&0\\ 0&-1&1&0&2&-1&0&2&-1\\ 0
&0&0&0&1&0&0&0&0\\ 0&0&0&0&0&0&0&1&0
\\ 0&0&1&0&0&0&0&0&0\\ 0&0&0&0&0&1
&0&0&0\\ 0&0&0&0&0&0&0&0&1\end {array} \right] 
.
\] 

One computes that the Jordan form has 2 $2\times 2$ blocks with eigenvalue $1$.  Moreover, there is no local gauge choice transforming $R_g$ to a charge conserving matrix.

\section{Applications II: representation theory}

\subsection{Braid representation theory generalities}

\newcommand{\id}{Id}

Fix $N$. Let $R$ be a rank-$N$ $R$-matrix, hence giving a braid rep $F_R:\Bcat \rightarrow \MAT^N$ by 
$F_R(\sigma) = R$. 
Indeed if $R$ is denoted $R_\chi$, say, then we may write just $F_\chi$ for the braid rep $F_{R_\chi}$; 
and $\rho^\chi_n$ for the ordinary representation of $B_n$ this gives. 

If an R-matrix $R$ is fixed then we write 
\beq  \label{eq:defRi}
R_i = \id_N^{\otimes i-1} \otimes R \otimes  \id_N^{\otimes n-i-1}
\eq  

We write 
$$
\AAAA^{\chi}_{n}   = \langle  R_1 , R_2, ..., R_{n-1}  \rangle  \;\; \subset \MAT^N(n,n) 
$$ 
for the image algebra of the rep of $B_n$ given by  $R_\chi$.  
We write ${\AAAA'}^{\chi}_{n}$ for the commutant. 
\medskip

\newcommand{\J}{{\mathsf{J}}}

\mdef  Fix $N\in\N$. 
For $j \in \{1,2,..,N\}$ define $\overline{j} = N+1-j$. 
For a 
word $w = w_1 w_2 \cdots w_l \in \{1,2,\ldots,N\}^l$ define $\overline{w} = \overline{w_1} \overline{w_2} \cdots \overline{w_l}$.

Fix $N$. 
The matrix $\J \in \MAT^N(1,1)$ is given by 
$\J_{i,j} = \delta_{i,N+1-j} = \delta_{i,\overline{j}}$. 

E.g., for $N=2$ we have $\J_{} =\begin{bmatrix}
    0 & 1\\ 1&0
\end{bmatrix}$.   

Where convenient we may 
write $\J_n$ for the $\J$ with $N=n$. 
Thus for example  
$
\J^{\otimes n} =  \J_N^{\otimes n} = \J_{N^n}  .
$

Note that conjugation by $\J^{\otimes n}$ in $\MAT^N(n,n)$ transforms a matrix $R$ by 
$R_{v,w} \mapsto R_{\overline{v},\overline{w}}$. 

For example, with $ N=3$ we have $(\J^{\otimes 2} M \J^{\otimes 2})_{13,23} = M_{31,21}$. 

In other words if $M \mapsto M^T$ is transpose and, for a square matrix, 
$M \mapsto M^\dashv $  is skew-transpose,
then $\J M \J = (M^T)^\dashv$.   

\medskip 

The following is {possibly} well-known, at least in rank $N=2$ (see e.g. \cite{Hietarinta92}):

\begin{lemma}  \label{lem:skewif}
Let $N \in \N$. 
    If a YBO $R\in Aut(\C^N\otimes \C^N) \subset \MAT^N(2,2)$ is invariant under the skew transpose 
    $R_{ij,kl} \mapsto R_{\overline{kl},\overline{ij}}$ 
    then for each $n \in \N$ the representation $\rho_R$ given by 
    the functor acting on $B_n$, i.e by $F_R(B_n)$, is equivalent to its dual 
    $\rho_{R^T}   = \rho_{R}^T$. 
    \ppmm{   
    as given by $ (\rho^R_n)^T$, which can also be written $\rho^{R^T}_n$.}
\end{lemma}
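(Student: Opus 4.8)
\medskip
\noindent\emph{Proof strategy.}
The plan is to produce a single explicit intertwiner that works uniformly in $n$, and the remark in the preamble points to the right candidate: conjugation by $\J^{\otimes n}$ sends a matrix entry $R_{v,w}$ to $R_{\overline{v},\overline{w}}$. So I would set $P_n := \J^{\otimes n} = \J_N^{\otimes n}\in\MAT^N(n,n)$; this is an involutive permutation matrix, hence invertible with $P_n^{-1}=P_n$.

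First I would restate the hypothesis in operator form. Reading $ij,kl$ as the row and column words, the skew-transpose invariance $R_{ij,kl}=R_{\overline{kl},\overline{ij}}$ says exactly that $R_{ij,kl}=(R^T)_{\overline{ij},\overline{kl}}$, i.e.\ $R=\J^{\otimes 2}R^T\J^{\otimes 2}$; equivalently, since $(\J^{\otimes 2})^2=\id$, that $\J^{\otimes 2}\,R\,\J^{\otimes 2}=R^T$. This is just the $n=2$, $M=R$ instance of the identity $\J M\J=(M^T)^\dashv$ recorded above. As a byproduct, $R^T$ is again a YBO, being locally equivalent to $R$ via $\J_N$, so $\rho^{R^T}_n$ is itself a braid representation.

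The core is then a one-line tensor manipulation. For each $n$ and each $i$ with $1\le i\le n-1$, factor $\J^{\otimes n}=\J^{\otimes(i-1)}\otimes\J^{\otimes 2}\otimes\J^{\otimes(n-i-1)}$; using multiplicativity of the Kronecker product together with $\J\,\id_N\,\J=\id_N$, conjugating $R_i$ of \eqref{eq:defRi} by $P_n$ yields $\id_N^{\otimes(i-1)}\otimes(\J^{\otimes 2}R\,\J^{\otimes 2})\otimes\id_N^{\otimes(n-i-1)}=\id_N^{\otimes(i-1)}\otimes R^T\otimes\id_N^{\otimes(n-i-1)}$ by the reformulated hypothesis. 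Since transpose commutes with the Kronecker product (for a fixed order of tensor factors), this last matrix is precisely $(R_i)^T=(R^T)_i=\rho^{R^T}_n(\sigma_i)$.

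Finally, since the $\sigma_i$ generate $B_n$, it follows that $P_n\,\rho^R_n(b)\,P_n^{-1}=\rho^{R^T}_n(b)$ for every $b\in B_n$, so $\rho^R_n$ and its dual $\rho^{R^T}_n=(\rho^R_n)^T$ are equivalent via $\J^{\otimes n}$, as claimed. I expect the only real obstacle to be bookkeeping with conventions: matching ``invariance under the skew transpose'' to the conjugation-by-$\J$ statement (keeping straight which of $ij,kl$ is row versus column, and the index-reversal $\overline{\,\cdot\,}$ versus ordinary transpose), and confirming that the conjugate of $\rho^R_n$ produced this way is the representation the statement calls the dual. Once that dictionary is fixed there is no computation of substance left, and nothing is special about $N=2$.
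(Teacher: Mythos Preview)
Your argument is correct and is essentially the paper's proof spelled out in full: the paper also takes $P_n=\J^{\otimes n}=J_{N^n}$ as the intertwiner, uses that conjugation by $J_{N^2}$ is the composite of transpose and skew-transpose (so the skew-transpose invariance of $R$ gives $J_{N^2}RJ_{N^2}=R^T$), and then observes that conjugating $R_i$ by $J_{N^n}$ yields $(R^T)_i$. Your version just unpacks the tensor factorisation and the convention-checking more carefully.
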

\begin{proof}  
\ignore{Define $J_N$ to be the $N\times N$ skew diagonal matrix with non-zero entries all $1$, eg., for $N=2$ we have $J_2=\begin{bmatrix}
    0 & 1\\ 1&0
\end{bmatrix},$ and $J_{N^n}:=J_k^{\otimes n}$.}  
    Note that conjugation by $J_{N^2}$ carries any matrix to the simultaneous transpose and skew transpose.  Thus conjugation of $R_i$ by $J_{N^n}$ yields $(R^T)_i$ since $R$ is invariant under skew-transpose.  Thus $J_{N^n}$ intertwines the representations $\rho_R$ and $\rho_{R^T}$.
\end{proof}

\subsection{Representation theory in rank $N=2$}   \label{ss:repsN2}

In preparation for passing to the rank $N=3$ case it will be useful briefly to consider 
some of these rank-2 representations from the algebraic perspective - the effect of added glue on representations per se. 

\subsubsection{Little-rank $n=2$}

One of the simplest aspects to consider is the equivalence class of representation of $B_2$ that we get. 
This is largely determined in effect by the Jordan form of the image of the elementary braid, the $R$-matrix itself. 

Another way to think of this is in terms of the `local relation' - the 
minimal  
polynomial of $R$. 

\mdef 
First consider a-glue, i.e. $R_{ag}$. 
If $p \neq -q$ it will be clear that the Jordan form is diagonal irrespective of the value of $k$. 
On the other hand, if $p=-q$ then while the central block becomes a single Jordan block (note that it is unipotent but not the identity matrix), the `outer' block now also becomes a single Jordan block unless $k=0$. 

The local relation is always $(R-p)(R+q)=0$ here - quadratic, so that our braid representation is always also a Hecke representation. This holds even if $p=-q$ and the quadratic is degenerate. 
In Hecke parameterisation one often normalises so that eigenvalue $p=1$ and eigenvalue $-q = -v^2$ 
(indeed the Hecke parameter here called $v$ is usually called $q$, but we already have an ambiguity to avoid here, hence $v$). The case $p=-q$ thus corresponds to $1=-v^2$, so $v=i$ - the `root of unity'. 
Thus $[2] := v+ v^{-1} = i-i=0$. 

Note that this ag case works as a proof that glue in our present sense changes the representation theory, 
by (sometimes) adding more radical. 
However, this example is too small to investigate every aspect. This is a representation of braid that is also a representation of Hecke, and the dimension of the image does not go up, because the whole radical is already present in the inner block. 

\mdef 
Let us turn  
now to the various f cases. 
\medskip 
\\ 
For f itself the local relation is again quadratic, so we always have a Hecke representation (indeed Temperley--Lieb). 
This classical case is completely understood in all ranks for all parameter values. 
\medskip 
\\ 
For f-glue-I however, we see that the local relation is 
\beq  \label{eq:cubicz} 
(R-1)^2(R+1)=0
\eq   
So this is not Hecke.
By the HoG machinery it has a quotient (only quotienting by a subideal of the radical) that is Temperley--Lieb, 
so, for example, its simple part is Temperley--Lieb. But overall it is not even Hecke. 
We address the ordinary  representation theory in all little-ranks $n$ in \S\ref{ss:caseRfI}.    \medskip 
\\
For f-glue-II, as in (\ref{eq:fII}), we have the following points. 
Firstly we take $k=1$ without significant loss of generality. 
Thus the CC part is X-equivalent to the CC part of $R_{fI}$ and hence $\infty$-equivalent by \cite[Corollary 7.24]{MRT25}; so, by Theorem \ref{cth:HoG}, the simple representation theories of $R_{fI}$ and $R_{fII}$ coincide--they are encoded in the CC parts.  Remarkably, the specialisation  of $R_{fII}$ to $p=q=0$ and $s=1$ (called $\R_\diamondsuit$ below) is  $\infty$-equivalent to $R_{fI}$ on the nose, as we will prove in Proposition \ref{prop: fI and diamond equiv}. 

Interesting points on the variety to consider are 
\[
R_{\diamondsuit} = \left[ \begin{array}{cccc} 
1&0&0&1   \\ \noalign{\medskip}
0&0&1&0  \\ \noalign{\medskip}
0&1&0&0  \\ \noalign{\medskip}
0&0&0&1  \end{array}
 \right] 
 , \hspace{.51cm} 
 R_{\spadesuit} = \left[ \begin{array}{cccc} 
1&1&1&1   \\ \noalign{\medskip}
0&0&1&1  \\ \noalign{\medskip}
0&1&0&1  \\ \noalign{\medskip}
0&0&0&1  \end{array}
 \right] 
 , \hspace{.51cm} 
 R_{\heartsuit} = \left[ \begin{array}{cccc} 
1&1&0&1   \\ \noalign{\medskip}
0&0&1&1  \\ \noalign{\medskip}
0&1&0&0  \\ \noalign{\medskip}
0&0&0&1  \end{array}
 \right] 
\]

The cases 
$R_{\diamondsuit}$ and 
$R_{\spadesuit}$  
have 
minimal polynomial $(R-1)^2(R+1)$, like $R_{fI}$; 
while $R_{\heartsuit}$  
has minimal polynomial $(R-1)^3(R+1)$ (and hence is not equivalent to $R_{fI}$, cf. (\ref{eq:cubicz})). 

\ignore{{
Aside: It is interesting to note that $R_{\diamondsuit}$ indeed has the same structure as
$R_{fI}$, computed in similar time.  Meanwhile $R_{\spadesuit}$ takes far longer (indeed so far does not complete $n=6$. (And $R_{\heartsuit}$ not yet attempted.)   ... 
... 
}}

It is convenient to summarize some low rank cases with a table as in Table~\ref{tab:magm}.

\begin{table}[]
\centering
\begin{tabular}{c|cccc|ccc|ccc}
         & dim         & dim          & dim     & dim    & dim & dim & dim &dim&dim & dim \\ 
        n & algebra $\AAAA^{fI}_n$ & $\AAAA^{fI}_n$/$rad$ & $rad^2$ & $rad^3$ & $\AAAA^{\spadesuit}_n$ & $\AAAA^{\spadesuit}_n / rad$  & $rad^2$ & $\AAAA^{\heartsuit}_n$ & /$rad$ & $rad^2$ \\ \hline 
        2 &      3        &   2            & 0   & 0  & 3 & 2 & 0 & 4 & 2&1 \\
        3 &     10       &  5           &  0    & 0  & 10 & 5 & 0 & 20 & 5 &6 \\
        4 &     35      & 14          & 1     & 0   &  35 & 14 & 1 & 70 & 14 &28 \\
        5 & 126        &  42         &  9     & 0   & 126 & 42 & 9 & 252 & 42&120 \\ 
        6 & 462       &  132       & 55      & 1
\end{tabular}
\caption{
Dimensions of parts of the image algebra $\AAAA^{fI}_n$ for the rep $fI$, 
and $\AAAA^{\spadesuit}_n$ 
and $\AAAA^{\heartsuit}_n$ 
(partly according to a Magma calculation \cite{Magma}) 
in low little-rank $n$. 
Notice the semisimple quotient dimensions are Catalan as required.}
    \label{tab:magm}
\end{table}

\ignore{{
\subsubsection{Little-rank $n=3$}

$\;$ 

\mdef Next we can consider little-rank $n=3$.  
For $R$ we have ...   ...
}}

\subsubsection{A case study: the case $R_{fI}$}   \label{ss:caseRfI}

First we will explain why $R_{fI}$ is an illuminating case to study; and then we will analyse the representation theory in all little-ranks $n$. 
The point is that the HoG Theorem immediately gives the simple content of the representations for all $n$ here (and the de-glued version is semisimple). 
So we can demonstrate the effect of the glue by looking at the non-semisimple structure. 

Consider the $R_{fI}$ solution:  $\left[ \begin {array}{cccc} 1&0&0&1\\ \noalign{\medskip}0&0&-1&0
\\ \noalign{\medskip}0&-1&0&0\\ \noalign{\medskip}0&0&0&1\end {array}
 \right] .$

\mdef 
Observe that the de-glued version of this is the classical rank-2 tensor space representation of the symmetric group, Schur--Weyl dual to the usual $sl_2$ action. 
So the de-glued version is semisimple, with dimensions of simple summands given by the usual truncated Pascal triangle; and multiplicities given by the corresponding dimensions of simple $sl_2$-modules. 
Indeed we know the irreducible content of each CC block. 
The CC blocks $Y_c$ are labelled by compositions $c$ of $n$ into two parts - each of which compositions corresponds to an integer partition $\overline{c} = \lambda = (n-m,m)$ by reordering if necessary. 
The isomorphism class of $Y_c$ depends only on $\lambda$, and 
\[
Y_\lambda \cong \bigoplus_{\lambda' \leq \lambda} S_{\lambda'} 
\]
where $\lambda' \leq \lambda$ if $\lambda' \in \{ (n-m,m), (n-m+1,m-1), ... \}$. 
For example $Y_{(3,1)} = S_{(3,1)} \oplus S_{(4)}$ (as expected, since $Y_{(3,1)}$ coincides with the usual defining representation of $\Sym_4$). 

By the HoG Theorem \ref{cth:HoG} then the rep given by $R_{fI}$ has exactly the same simple content (simple filtration factors) as the de-glued version above, for all $n$. 
So next we turn to the structure with the glue in place - which we know from \ref{cth:HoG} can only have the effect of making the sum of simple modules non-split. 

So next we will determine the size of the indecomposable blocks for each $n$. 

\mdef   \label{lem:already}
Note that we already know that there are at least two indecomposable blocks in each case, by \ref{lem:parity}. 
That is, different parity sectors are necessarily in different blocks, since $R_{fI}$ itself preserves parity. 

Note also that the two different parity sectors are easily seen to be of the same size. 

\medskip

 Define $\mathscr{A}_n=\langle R_1,\ldots, R_{n-1}\rangle$ to be the image of the braid group algebra under the representation $\rho_n = \rho_{R_{fI},n}$. 
 Define $\AAAA'_n$ to be the commutant of this tensor space algebra. 

\mdef \label{prop:a-glue decomp} \textbf{Proposition}
For $n\geq 3$ the representation $\rho_n$ 
obtained from $R_{fI}$ 
decomposes as the direct sum of two indecomposable representations of dimension $2^{n-1}$.
\medskip

\noindent 
{\em{Proof}}.  Define $D_1=\begin{bmatrix}a&0\\0&b\end{bmatrix}$. For any matrix $M$ with entries in the function field $\C(a,b)$ 
here 
define $M'$ to be the image of $M$ under the automorphism defined by $a\leftrightarrow b$. Thus, for example, $D_1'=\begin{bmatrix} b&0\\0&a\end{bmatrix}$.  For $n\geq 2$ define $D_n=\begin{bmatrix} D_{n-1}&0\\0&D_{n-1}'\end{bmatrix}$.
For $n\geq 3$ we 
\ppmm{will now} 
use induction to prove that:\begin{enumerate}
    \item if $T\in\AAAA_n'$ then there exists $N_1$ and $N_2$ strictly upper triangular and $B\in \AAAA'_{n-1}$ such that  \[T=\begin{bmatrix} D_{n-1}+N_1 & B\\ 0& D'_{n-1}+N_2 \end{bmatrix}.\]
    \item \(\begin{bmatrix} D_{n-1} & 0\\ 0&D'_{n-1} \end{bmatrix}\in \AAAA'_n.\)
\end{enumerate}
Direct calculation shows that any $T\in\AAAA'_3$ has the form:  \[\left[ \begin {array}{cccccccc} a&h&-h&c&h&-c&c&d
\\ \noalign{\medskip}0&b&0&f&0&-f&e&g\\ \noalign{\medskip}0&0&b&-f&0&e
&-f&-g\\ \noalign{\medskip}0&0&0&a&0&0&0&h\\ \noalign{\medskip}0&0&0&e
&b&-f&f&g\\ \noalign{\medskip}0&0&0&0&0&a&0&-h\\ \noalign{\medskip}0&0
&0&0&0&0&a&h\\ \noalign{\medskip}0&0&0&0&0&0&0&b\end {array} \right] \]

so (both) statements hold for $n=3$.  

Suppose the statement holds for $\AAAA_{n-1}$, and consider $\AAAA_{n}$.
 For (1): if $T$ commutes with $I\otimes \AAAA_{n-1}\subset\AAAA_n$ then $T$ has a $2\times 2$ block structure with each block of the given form, by hypothesis.  Then commutation with $R_1$ yields the desired form.  

For (2): the given diagonal matrix commutes with $I\otimes \AAAA_{n-1}$ by hypothesis, and by $R_1$ by direct verification of the block $4\times 4$ matrices.

From (1) we conclude that the representation $\rho_n$ decomposes into \emph{at most} two indecomposable summands, since any matrix in $\AAAA'_n$ has at most 2 distinct eigenvalues. From (2) we 
\ppmm{confirm (cf. \ref{lem:already})}
it decomposes into \emph{at least} 2 indecomposable summands, each of dimension $2^{n-1}$ by observing that the eigenvalues $a,b$ each appear with multiplicity $2^{n-1}$. 
\qed
\medskip  

\mdef 
Observe that this gives a lower bound on the dimension of the radical which shows 
that it is rapidly growing with $n$ 
(the dimensionally-cheapest way to make the sum in a block indecomposable is to glue - in the ordinary radical sense - every summand to one of the 1-dimensional summands). 
Since the de-glued case has no radical for any $n$, this is a nice example of the 
profound effect of adding glue. 

\vspace{.2cm}

Next we give an interesting example of an \ppmm{algorithmic entanglement} - a natural isomorphism of braid representations (in this case between $R_{fI}$ and  $R_{\diamondsuit}$)   where the transformation is not built by local matrices 
(if there is no such local transformation we say the reps are not locally or `gauge' equivalent).

Aside: One may easily 
verify  
that $R_{fI}$ and $R_{\diamondsuit}$  are not gauge equivalent.

\mdef\label{prop: fI and diamond equiv}  {\bf{Proposition}}.
The braid representations given by $R_{fI}$ and $R_{\diamondsuit}$ 
(giving the ordinary representation sequences 
$\rho^{fI}$ and $\rho^\diamondsuit$) are $\infty$-equivalent, by means of a diagonal intertwiner \ppmm{in each little-rank $n$}.  


\medskip 

\newcommand{\X}{\mathcal{X}}

\noindent {\em Proof}.     
We will   
define a specific candidate for the set of intertwiners, then show that it works, using 
properties of its construction that we will first exhibit.

Define $4$ diagonal matrices 
$A=diag(1,-1,1,1), \; 
 B=diag(1,1,-1,1), \;$ $ C=diag(-1,-1,1,-1)$ and $D=diag(-1,1,-1,-1)$.  
\ppmm{Observe that}
for $X\in \mathcal{X}=\{A,B,C,D\}$ we have 
$R_{fI} \; X \;=\; X \; R_\diamondsuit$.  
For $X_i\in\mathcal{X}$ denote the direct sum $X_1\oplus \cdots \oplus X_n$ by the word $X_1X_2\cdots X_n$.

Next define a map on the words in $\mathcal{X}$ as follows. On generators set:
$\varphi(A)=BA, \; \; \varphi(B)=AC, \;\; \varphi(C)=DB$ and $\varphi(D)=CD$, and then extend multiplicatively.  
Next define words inductively by $w_1=A$ and $w_{n}=\varphi(w_{n-1})$.  
Thus, for example $w_2=BA$ and $w_3=ACBA$.   
\ppmm{
Let $T_n$ be the diagonal matrix of size $2^{n}$ corresponding to the word $w_{n-1}$.  }

We claim that 
$\rho_n^{fI}(\beta \; )T_n \;=\; T_n \; \rho_n^\diamondsuit(\beta)T_n$, 
for all $\beta\in B_n$ i.e. $T_n$ intertwines $\rho_n^{fI}$ and $\rho_n^\diamondsuit$.
It is enough to verify on generators $R_i$. 
In little-rank $n=2$ we have the intertwiner property for the R-matrices as already observed. 
To do all ranks it will be convenient to set up some machinery as follows. 

\medskip 

Write $g$ for the map from words in $\X$ to words in $\{\pm1\}$ by the obvious substitution, 
$g(A)=1x11$ (we write $x$ for -1 for ease of parsing as part of a string)   and so on. 
Of course $g$ is injective and $g(wv)=g(w)g(v)$. 
The formal inverse $g'$ of $g$ is `well-defined' when a word in $\{\pm1\}$   lies in the image of $g$. 

Let us write $g_o(w)$ and $g_e(w)$ for the odd and even position subsequences of $g(w)$, respectively. 
And $g_-(w)=g_o(w) g_e(w)$. 
For example since $g(BA)=11x11x11$ 
we have $g_e(BA)=11x1$; and $g_-(BA) = 1x11 11x1 $, 
which has 
$g'(g_-(BA)) = AB$. 
Indeed $g'(g_-(A))=B$ and $g'(g_-(B))=A$ and the previous identity follows.

We say that a word $w$ in $\X$ is `well-combed' if it is length 1 or if 
$g'(g_e(w))$ and $g'(g_o(w))$  
are well-defined (hence $g'(g_-(w))$ also); and  
$g'(g_-(g'(g_-(w))))$ is also well-defined, and so on - iterating until the parts $g'(g_e(g'(g_e ... (w))))$ and so on are length 1.

\medskip 

\newcommand{\leadstog}{\stackrel{g_-}{\leadsto}}

The full sequence at $n=3$ is  $g(BA)=$ 11x11x11. 
Since $R_1 =  R\otimes 1_2 =R \oplus R$ the candidate intertwiner works for $R_1$. 
For $R_2 = 1_2 \otimes R$ its R-blocks meet positions 1357 and 2468, which are 
$g_o(BA)=$ 1x11 $\sim A$ and $g_e(BA)=$ 11x1 $\sim B$, which are again blockwise intertwiners. 
We can summarize as
\[
BA \leadstog AB 
\]

For $n=4$ we have ACBA giving  
 1x11xx1x11x11x11. 
Again there is nothing further to check for $R_1 = R\otimes 1_4 = R \oplus R \oplus R \oplus R$. 
For $R_2  = 1_2 \otimes R \otimes 1_2 = 1_2 \otimes (R\oplus  R)$ we extract positions 13579135 (dropping the second digit for alignment)  
giving $g_o(ACBA)$= 11x11x11 which is valid - it is $g'(g_o(ACBA))=BA$; 
and 24680246 giving  x1xx11x1, which is valid - it is $g'(g_e(ACBA))=CB$. 
For $R_3 = 1_4 \otimes R$ we extract positions 1593 (from the original 16 digit word) giving 1x11 $\sim A$; 
2604 giving xx1x $\sim C$; 3715 giving 11x1 $\sim B$; and 4826 giving 1x11 $\sim A$,
all of which are valid. 
\\
Observe that since $g_o$ extracts odd position entries, 
the odd position entries from $g_o(ACBA)$ give the 1593 entries from the original: $g_o(g_o(ACBA))= g_o(11x11x11) = 1x11 $ (abusing notation slightly; 
and  the even positions give the 3715 entries: $ g_e(g_o(ACBA))= g_e(11x11x11) = 11x1$.
Meanwhile $g_o(g_e(ACBA))  $ takes odd positions from the even position subset, so positions 2604 from the original:  $g_o(g_e(ACBA)) =g_o(x1xx 11x1) = xx1x $; 
and finally $g_e(g_e(ACBA)) = g_e(x1xx 11x1) = 1x11$ gets positions 4826. 
\\ 
We can write in summary 
\[
ACBA \leadstog BACB \leadstog ACBA 
\]

For $n=5$ we have BADBACBA giving 
 11x1 1x11 x1xx 11x1 1x11 xx1x 11x1 1x11
(separated just to aid counting). 
The decomposition into odd and even parts for $R_2$ gives: 1x11 xx1x 11x1 1x11 and 11x1 1x11 x1xx 11x1, which is valid. 
(These recode as ACBA and BADB)
The decomposition mod 4, for $R_3$, can be done by iterating the odd/even, to give 
11x1 1x11 $\sim BA$, x1xx 11x1 $\sim DB$, 
1x11 xx1x  $\sim AC$, 11x1 1x11 $\sim BA$. 
Finally mod 8, for $R_4$, we have 1x11, 11x1, xx1x, 1x11, 11x1, x1xx, 1x11, 11x1. 
Observe the summary:
\[
BADB \; ACBA \leadstog ACBA \; BADB \leadstog BADB \; ACBA \leadstog ABCA \; BDAB
\]

Overall we see that our claimed intertwiners work provided that all the necessary applications of $g_-$ are well-defined (each giving the verification of intertwining for a corresponding $R_i$. 
It remains only to show that every application of $g_-$ is well-defined - i.e. produces a word in the image of $g$. 

Observe that the first (4-term) factor in $g_-(X_1 X_2 X_3 ...)$ comes from positions 1357 of the original 
(recall $g_o$ picks factors in positions 135791357...), so depends only on $X_1 X_2$. The next factor (for sufficiently long word) comes from positions 9135, so  only depends on $X_3 X_4$. And so on until:  
The first factor in the $g_e$ part comes from positions 2468, so only depends on $X_1 X_2$. 
And the next depends on $X_3 X_4$, and so on. 
Thus to show that every application of $g_-$ is well-defined we only need to show that the image of every 
word of length 2 (that can arise) is well-defined. 
Considering $g_-(BB) = g_o(11x1 11x1) g_e( 11x1 11x1 ) = 1x1x ...$, which is not well-defined, 
we must establish that BB and some other sequences do not arise as $X_{2i-1} X_{2i}$. 
But this is clear from $\varphi$ - every such pair arises as the image of a single symbol, and so the total 
set of possibilities is BA, AC, DB, CD. 
So it only remains to check these. We already checked BA, AC. For DB we have x1xx 11x1, giving xx1x and 1x11.
For CD we have xx1x x1xx giving x1xx and xx1x.
\qed 


\subsubsection{A comparative case study: the point $R_{\diamondsuit}$ in $R_{fII}$}    \label{ss:casediamond}

$\;$ 

\newcommand{\II}{{\mathfrak{I}}}  

It is easy to verify here that every matrix of the form 
\beq \label{eq:commfII} 
\left[\begin{array}{cccccccc}
a  & h  & h  & c  & h  & c  & c  & d  
\\
 0 & b  & 0 & f  & 0 & f  & e  & g  
\\
 0 & 0 & b  & f  & 0 & e  & f  & g  
\\
 0 & 0 & 0 & a  & 0 & 0 & 0 & h  
\\
 0 & 0 & 0 & e  & b  & f  & f  & g  
\\
 0 & 0 & 0 & 0 & 0 & a  & 0 & h  
\\
 0 & 0 & 0 & 0 & 0 & 0 & a  & h  
\\
 0 & 0 & 0 & 0 & 0 & 0 & 0 & b  
\end{array}\right]
\eq  
lies in the commutant. 
(Probably it is instructive to compare with the $fI$ commutant in Proposision \ref{prop:a-glue decomp} above.) 
In fact we claim every element of the commutant takes this form. This is a stronger claim and so harder to verify. But let us take it on trust for now. 
Then it is clear that $\AAAA'$ has a 2d semisimple part ($a,b$); and a 6d radical ($c,d,e,f,g,h$). 
This seems to suggest two non-isomorphic indecomposable summands in $\AAAA$. 

By parity 
(as in (\ref{lem:parity}))
the two blocks are the 111,122,212,221 block; and the 112,121,211,222 block. 
\\
Both blocks have simple content 1/2/1 (writing a simple module just as its dimension) in some order. 
Let us try the following ansatz for the socle series for one of them: 
\[
\II_{odd}' \; = \; \begin{array}{c}
    1   \\
     2 \\
     1
\end{array}
\]
- note this allows a class potentially bigger than an isomorphism class. 
This would allow two self-maps: the self-isomorphism and the head to socle map. 
However, since each indecomposable/block module is the contravariant dual of the other 
(via the transpose isomorphism from \ref{lem:skewif}), and this structure is self-dual, this would imply two isomorphic modules, and hence a 2x2 matrix algebra in the commutant,
contradicting 
the completeness of 
(\ref{eq:commfII}). So we must reject this ansatz. 

Instead we can try 
\[
\II_{odd} = \; \begin{array}{c}
    1  \oplus 2 \\
     1
\end{array} ,   \hspace{1cm} 
\II_{eve} = \; \begin{array}{c}
 1 \\
    1  \oplus 2 
\end{array}
\]
Again each has two self-maps - isomorphism; and a map taking the 1 in the head to the 1 in the socle. And there is no isomorphism between them. Instead there are head-to-socle 1-maps both ways; and a head-to-socle 2-map from odd to even. 
%
This agrees in part with what we see in \ref{eq:commfII}, where the $a$ sector has $a$ and $c$;
and the $b$ sector has $b$ and $g$. 

\subsubsection{The case $R_{\heartsuit}$}

We can also determine the indecomposable blocks in all ranks in this case. 
We proceed as follows

Direct computation shows that  $T\in(\AAAA_3^\heartsuit)'$ has the form: 
\[\left[ \begin {array}{cccccccc} a&b&b&c&b&c&c&d\\ \noalign{\medskip}0
&a&0&b&0&b&0&c\\ \noalign{\medskip}0&0&a&b&0&0&b&c
\\ \noalign{\medskip}0&0&0&a&0&0&0&b\\ \noalign{\medskip}0&0&0&0&a&b&b
&c\\ \noalign{\medskip}0&0&0&0&0&a&0&b\\ \noalign{\medskip}0&0&0&0&0&0
&a&b\\ \noalign{\medskip}0&0&0&0&0&0&0&a\end {array} \right].\] 
This provides the base case for the proof by induction of the following:

\begin{lemma}\label{heartlem}
    Any $T\in(\AAAA_n^\heartsuit)'$ for $n\geq 3$ has the form: \[\begin{bmatrix} aI+N & B\\ 0& aI+N \end{bmatrix}\] where $N$ is a strictly upper triangular matrix.
\end{lemma}
\begin{proof}
    The by-now-familiar strategy is to observe that commuting with $I_2\otimes \AAAA_{n-1}^\heartsuit$ shows that $T\in(\AAAA_n^\heartsuit)'$ has the form 
    
    \[\begin{bmatrix} a_1I+N_1 & B_1 & a_2I+N_2 &B_2\\ 0& a_1I+N_1&0&a_2I+N_2\\
    a_3I+N_3 & B_3&a_4I+N_4&B_4\\0&a_3+N_3&0&a_4I+N_4\end{bmatrix}\]
    where $a_i$ are non-zero constants, the $N_i$ are upper triangular and the $I$ are identity matrices of size $2^{n-2}$.  Then commuting with \[R_\heartsuit\otimes I_{2^{n-2}}=
    \begin{bmatrix} I & 0 & 0 &I\\ 0& 0&I&0\\
    0 & I&0&0\\0&0&0&I\end{bmatrix}\] implies that $a_1=a_4$, $a_3=0$,  and $N_3=B_3=0$, and $N_1=N_4$ as required.
\end{proof}

From this we derive:
\begin{corollary}
    The algebra $\AAAA_n^\heartsuit$ is indecomposable.
\end{corollary}
\begin{proof}
   Let $E$ be any central idempotent in $\AAAA_n^\heartsuit$.  Then by Lemma \ref{heartlem} $E$ is of the form $aI+N$ where $N$ is nilpotent, hence $a\in\{0,1\}$ so $E$ is trivial.
\end{proof}

\ignore{{ 

\subsection{Representation theory in rank $N=2$}   \label{ss:repsN2}

In preparation for passing to the rank $N=3$ case it will be useful briefly to consider 
some of these rank-2 representations from the algebraic perspective - the effect of added glue on representations per se. 

\subsubsection{Little-rank $n=2$}

One of the simplest aspects to consider is the equivalence class of representation of $B_2$ that we get. 
This is largely determined in effect by the Jordan form of the image of the elementary braid, the $R$-matrix itself. 

Another way to think of this is in terms of the `local relation' - the 
minimal  
polynomial of $R$. 

\mdef 
First consider a-glue, i.e. $R_{ag}$. 
If $p \neq -q$ it will be clear that the Jordan form is diagonal irrespective of the value of $k$. 
On the other hand, if $p=-q$ then while the central block becomes a single Jordan block (note that it is unipotent but not the identity matrix), the `outer' block now also becomes a single Jordan block unless $k=0$. 

The local relation is always $(R-p)(R+q)=0$ here - quadratic, so that our braid representation is always also a Hecke representation. This holds even if $p=-q$ and the quadratic is degenerate. 
In Hecke parameterisation one often normalises so that eigenvalue $p=1$ and eigenvalue $-q = -v^2$ 
(indeed the Hecke parameter here called $v$ is usually called $q$, but we already have an ambiguity to avoid here, hence $v$). The case $p=-q$ thus corresponds to $1=-v^2$, so $v=i$ - the `root of unity'. 
Thus $[2] := v+ v^{-1} = i-i=0$. 

Note that this ag case works as a proof that glue in our present sense changes the representation theory, 
by (sometimes) adding more radical. 
However, this example is too small to investigate every aspect. This is a representation of braid that is also a representation of Hecke, and the dimension of the image does not go up, because the whole radical is already present in the inner block. 

\mdef 
Let us turn  
now to the various f cases. 
\medskip 
\\ 
For f itself the local relation is again quadratic, so we always have a Hecke representation (indeed Temperley--Lieb). 
This classical case is completely understood in all ranks for all parameter values. 
\medskip 
\\ 
For f-glue-I however, we see that the local relation is 
\beq  \label{eq:cubicz} 
(R-1)^2(R+1)=0
\eq   
So this is not Hecke.
By the HoG machinery it has a quotient (only quotienting by a subideal of the radical) that is Temperley--Lieb, 
so, for example, its simple part is Temperley--Lieb. But overall it is not even Hecke. 
We address the ordinary  representation theory in all little-ranks $n$ in \S\ref{ss:caseRfI}.    \medskip 
\\
For f-glue-II, as in (\ref{eq:fII}), we have the following points. 
Firstly we take $k=1$ without significant loss of generality. 
Thus the CC part is X-equivalent to the CC part of $R_{fI}$; so that the simple representation theories of $R_{fI}$ and $R_{fII}$ coincide. 

And \sout{hence} 
\ppm{not hence! - how show this, if true?}
the case $p=q=s=t=0$ is apparently equivalent to $R_{fI}$ on the nose. 

Interesting points on the variety to consider are 
\[
R_{\diamondsuit} = \left[ \begin{array}{cccc} 
1&0&0&1   \\ \noalign{\medskip}
0&0&1&0  \\ \noalign{\medskip}
0&1&0&0  \\ \noalign{\medskip}
0&0&0&1  \end{array}
 \right] 
 , \hspace{.51cm} 
 R_{\spadesuit} = \left[ \begin{array}{cccc} 
1&1&1&1   \\ \noalign{\medskip}
0&0&1&1  \\ \noalign{\medskip}
0&1&0&1  \\ \noalign{\medskip}
0&0&0&1  \end{array}
 \right] 
 , \hspace{.51cm} 
 R_{\heartsuit} = \left[ \begin{array}{cccc} 
1&1&0&1   \\ \noalign{\medskip}
0&0&1&1  \\ \noalign{\medskip}
0&1&0&0  \\ \noalign{\medskip}
0&0&0&1  \end{array}
 \right] 
\]
\ppm{[-oops no, the old $\spadesuit$ was not an R matrix! strike that. others, and new $\spadesuit$, OK.]}
The cases 
$R_{\diamondsuit}$ and 
$R_{\spadesuit}$  
have 
minimal polynomial $(R-1)^2(R+1)$, like $R_{fI}$; 
while $R_{\heartsuit}$  
has minimal polynomial $(R-1)^3(R+1)$ (and hence is not equivalent to $R_{fI}$, cf. (\ref{eq:cubicz})). 

Aside: It is interesting to note that $R_{\diamondsuit}$ indeed has the same structure as
$R_{fI}$, computed in similar time.  Meanwhile $R_{\spadesuit}$ takes far longer (indeed so far does not complete $n=6$. (And $R_{\heartsuit}$ not yet attempted.)   ... 

... 

It is convenient to summarize some low rank cases with a table as in Table~\ref{tab:magm}.

\begin{table}[]
\centering
\begin{tabular}{c|cccc|ccc|ccc}
         & dim         & dim          & dim     & dim    & dim & dim & dim &dim&dim & dim \\ 
        n & algebra $\AAAA^{fI}_n$ & $\AAAA^{fI}_n$/$rad$ & $rad^2$ & $rad^3$ & $\AAAA^{\spadesuit}_n$ & $\AAAA^{\spadesuit}_n / rad$  & $rad^2$ & $\AAAA^{\heartsuit}_n$ & /$rad$ & $rad^2$ \\ \hline 
        2 &      3        &   2            & 0   & 0  & 3 & 2 & 0 & 4 & 2&1 \\
        3 &     10       &  5           &  0    & 0  & 17 & 5 & 3 & 20 & 5 &6 \\
        4 &     35      & 14          & 1     & 0   &  81 & 14 & 26 & 70 & 14 &28 \\
        5 & 126        &  42         &  9     & 0   & 347 & 42 & 149 & 252 & 42&120 \\ 
        6 & 462       &  132       & 55      & 1
\end{tabular}
\caption{Claimed Dimensions of parts of the image algebra $\AAAA^{fI}_n$ for the rep $fI$, 
and $\AAAA^{\spadesuit}_n$ \ppm{[-ignore this one! it is for the old spade, which is not a braid rep!]}
and $\AAAA^{\heartsuit}_n$ 
(if Magma is right!) in low little-rank $n$. 
Notice the semisimple quotient dimensions are Catalan as required.}
    \label{tab:magm}
\end{table}

\ignore{{
\subsubsection{Little-rank $n=3$}

$\;$ 

\mdef Next we can consider little-rank $n=3$.  
For $R$ we have ...   ...
}}

\subsubsection{A case study: the case $R_{fI}$}   \label{ss:caseRfI}

First we will explain why $R_{fI}$ is an illuminating case to study; and then we will analyse the representation theory in all little-ranks $n$. 
The point is that the HoG Theorem immediately gives the simple content of the representations for all $n$ here (and the de-glued version is semisimple). 
So we can demonstrate the effect of the glue by looking at the non-semisimple structure. 

Consider the $R_{fI}$ solution:  $\left[ \begin {array}{cccc} 1&0&0&1\\ \noalign{\medskip}0&0&-1&0
\\ \noalign{\medskip}0&-1&0&0\\ \noalign{\medskip}0&0&0&1\end {array}
 \right] .$

\mdef 
Observe that the de-glued version of this is the classical rank-2 tensor space representation of the symmetric group, Schur--Weyl dual to the usual $sl_2$ action. 
So the de-glued version is semisimple, with dimensions of simple summands given by the usual truncated Pascal triangle; and multiplicities given by the corresponding dimensions of simple $sl_2$-modules. 
Indeed we know the irreducible content of each CC block. 
The CC blocks $Y_c$ are labelled by compositions $c$ of $n$ into two parts - each of which compositions corresponds to an integer partition $\overline{c} = \lambda = (n-m,m)$ by reordering if necessary. 
The isomorphism class of $Y_c$ depends only on $\lambda$, and 
\[
Y_\lambda \cong \bigoplus_{\lambda' \leq \lambda} S_{\lambda'} 
\]
where $\lambda' \leq \lambda$ if $\lambda' \in \{ (n-m,m), (n-m+1,m-1), ... \}$. 
For example $Y_{(3,1)} = S_{(3,1)} \oplus S_{(4)}$ (as expected, since $Y_{(3,1)}$ coincides with the usual defining representation of $\Sym_4$). 

By the HoG Theorem \ref{cth:HoG} then the rep given by $R_{fI}$ has exactly the same simple content (simple filtration factors) as the de-glued version above, for all $n$. 
So next we turn to the structure with the glue in place - which we know from \ref{cth:HoG} can only have the effect of making the sum of simple modules non-split. 

So next we will determine the size of the indecomposable blocks for each $n$. 

\mdef   \label{lem:already}
Note that we already know that there are at least two indecomposable blocks in each case, by \ref{lem:parity}. 
That is, different parity sectors are necessarily in different blocks, since $R_{fI}$ itself preserves parity. 

Note also that the two different parity sectors are easily seen to be of the same size. 

\medskip

 Define $\mathscr{A}_n=\langle R_1,\ldots, R_{n-1}\rangle$ to be the image of the braid group algebra under the representation $\rho_n = \rho_{R_{fI},n}$. 
 Define $\AAAA'_n$ to be the commutant of this tensor space algebra. 

\mdef \label{prop:a-glue decomp} \textbf{Proposition}
For $n\geq 3$ the representation $\rho_n$ 
obtained from $R_{fI}$ 
decomposes as the direct sum of two indecomposable representations of dimension $2^{n-1}$.
\medskip

\noindent 
{\em{Proof}}.  Define $D_1=\begin{bmatrix}a&0\\0&b\end{bmatrix}$. For any matrix $M$ with entries in the function field $\C(a,b)$ 
here 
define $M'$ to be the image of $M$ under the automorphism defined by $a\leftrightarrow b$. Thus, for example, $D_1'=\begin{bmatrix} b&0\\0&a\end{bmatrix}$.  For $n\geq 2$ define $D_n=\begin{bmatrix} D_{n-1}&0\\0&D_{n-1}'\end{bmatrix}$.
For $n\geq 3$ we 
\ppmm{will now} 
use induction to prove that:\begin{enumerate}
    \item if $T\in\AAAA_n'$ then there exists $N_1$ and $N_2$ strictly upper triangular and $B\in \AAAA'_{n-1}$ such that  \[T=\begin{bmatrix} D_{n-1}+N_1 & B\\ 0& D'_{n-1}+N_2 \end{bmatrix}.\]
    \item \(\begin{bmatrix} D_{n-1} & 0\\ 0&D'_{n-1} \end{bmatrix}\in \AAAA'_n.\)
\end{enumerate}
Direct calculation shows that any $T\in\AAAA'_3$ has the form:  \[\left[ \begin {array}{cccccccc} a&h&-h&c&h&-c&c&d
\\ \noalign{\medskip}0&b&0&f&0&-f&e&g\\ \noalign{\medskip}0&0&b&-f&0&e
&-f&-g\\ \noalign{\medskip}0&0&0&a&0&0&0&h\\ \noalign{\medskip}0&0&0&e
&b&-f&f&g\\ \noalign{\medskip}0&0&0&0&0&a&0&-h\\ \noalign{\medskip}0&0
&0&0&0&0&a&h\\ \noalign{\medskip}0&0&0&0&0&0&0&b\end {array} \right] \]

so (both) statements hold for $n=3$.  

Suppose the statement holds for $\AAAA_{n-1}$, and consider $\AAAA_{n}$.
 For (1): if $T$ commutes with $I\otimes \AAAA_{n-1}\subset\AAAA_n$ then $T$ has a $2\times 2$ block structure with each block of the given form, by hypothesis.  Then commutation with $R_1$ yields the desired form.  

For (2): the given diagonal matrix commutes with $I\otimes \AAAA_{n-1}$ by hypothesis, and by $R_1$ by direct verification of the block $4\times 4$ matrices.

From (1) we conclude that the representation $\rho_n$ decomposes into \emph{at most} two indecomposable summands, since any matrix in $\AAAA'_n$ has at most 2 distinct eigenvalues. From (2) we 
\ppmm{confirm (cf. \ref{lem:already})}
it decomposes into \emph{at least} 2 indecomposable summands, each of dimension $2^{n-1}$ by observing that the eigenvalues $a,b$ each appear with multiplicity $2^{n-1}$. 
\qed
\medskip  

\mdef 
Observe that this gives a lower bound on the dimension of the radical which shows 
that it is rapidly growing with $n$ 
(the dimensionally-cheapest way to make the sum in a block indecomposable is to glue - in the ordinary radical sense - every summand to one of the 1-dimensional summands). 
Since the de-glued case has no radical for any $n$, this is a nice example of the 
profound effect of adding glue. 

\vspace{.2cm}

Next we give an interesting example of an \ppmm{algorithmic entanglement} - a natural isomorphism of braid representations (in this case between $R_{fI}$ and  $R_{\diamondsuit}$)   where the transformation is not built by local matrices 
(if there is no such local transformation we say the reps are not locally or `gauge' equivalent).

Aside: One may easily compute that $R_{fI}$ and $R_{\diamondsuit}$  are not gauge equivalent.

\mdef  {\bf{Proposition}}.
The braid representations given by $R_{fI}$ and $R_{\diamondsuit}$ 
(giving the ordinary representation sequences 
$\rho^{fI}$ and $\rho^\diamondsuit$) are $\infty$-equivalent, by means of a diagonal intertwiner \ppmm{in each little-rank $n$}.  


\medskip 

\newcommand{\X}{\mathcal{X}}

\noindent {\em Proof}.     
We will   
define a specific candidate for the set of intertwiners, then show that it works, using 
properties of its construction that we will first exhibit.

Define $4$ diagonal matrices 
$A=diag(1,-1,1,1), \; 
 B=diag(1,1,-1,1), \;$ $ C=diag(-1,-1,1,-1)$ and $D=diag(-1,1,-1,-1)$.  
\ppmm{Observe that}
for $X\in \mathcal{X}=\{A,B,C,D\}$ we have 
$R_{fI} \; X \;=\; X \; R_\diamondsuit$.  
For $X_i\in\mathcal{X}$ denote the direct sum $X_1\oplus \cdots \oplus X_n$ by the word $X_1X_2\cdots X_n$.

Next define a map on the words in $\mathcal{X}$ as follows. On generators set:
$\varphi(A)=BA, \; \; \varphi(B)=AC, \;\; \varphi(C)=DB$ and $\varphi(D)=CD$, and then extend multiplicatively.  
Next define words inductively by $w_1=A$ and $w_{n}=\varphi(w_{n-1})$.  
Thus, for example $w_2=BA$ and $w_3=ACBA$.   
\ppmm{
Let $T_n$ be the diagonal matrix of size $2^{n}$ corresponding to the word $w_{n-1}$.  }

We claim that 
$\rho_n^{fI}(\beta \; )T_n \;=\; T_n \; \rho_n^\diamondsuit(\beta)T_n$, 
for all $\beta\in B_n$ i.e. $T_n$ intertwines $\rho_n^{fI}$ and $\rho_n^\diamondsuit$.
It is enough to verify on generators $R_i$. 
In little-rank $n=2$ we have the intertwiner property for the R-matrices as already observed. 
To do all ranks it will be convenient to set up some machinery as follows. 

\medskip 

Write $g$ for the map from words in $\X$ to words in $\{\pm1\}$ by the obvious substitution, 
$g(A)=1x11$ (we write $x$ for -1 for ease of parsing as part of a string)   and so on. 
Of course $g$ is injective and $g(wv)=g(w)g(v)$. 
The formal inverse $g'$ of $g$ is `well-defined' when a word in $\{\pm1\}$   lies in the image of $g$. 

Let us write $g_o(w)$ and $g_e(w)$ for the odd and even position subsequences of $g(w)$, respectively. 
And $g_-(w)=g_o(w) g_e(w)$. 
For example since $g(BA)=11x11x11$ 
we have $g_e(BA)=11x1$; and $g_-(BA) = 1x11 11x1 $, 
which has 
$g'(g_-(BA)) = AB$. 
Indeed $g'(g_-(A))=B$ and $g'(g_-(B))=A$ and the previous identity follows.

We say that a word $w$ in $\X$ is `well-combed' if it is length 1 or if 
$g'(g_e(w))$ and $g'(g_o(w))$  
are well-defined (hence $g'(g_-(w))$ also); and  
$g'(g_-(g'(g_-(w))))$ is also well-defined, and so on - iterating until the parts $g'(g_e(g'(g_e ... (w))))$ and so on are length 1.

\medskip 

\newcommand{\leadstog}{\stackrel{g_-}{\leadsto}}

The full sequence at $n=3$ is  $g(BA)=$ 11x11x11. 
Since $R_1 =  R\otimes 1_2 =R \oplus R$ the candidate intertwiner works for $R_1$. 
For $R_2 = 1_2 \otimes R$ its R-blocks meet positions 1357 and 2468, which are 
$g_o(BA)=$ 1x11 $\sim A$ and $g_e(BA)=$ 11x1 $\sim B$, which are again blockwise intertwiners. 
We can summarize as
\[
BA \leadstog AB 
\]

For $n=4$ we have ACBA giving  
 1x11xx1x11x11x11. 
Again there is nothing further to check for $R_1 = R\otimes 1_4 = R \oplus R \oplus R \oplus R$. 
For $R_2  = 1_2 \otimes R \otimes 1_2 = 1_2 \otimes (R\oplus  R)$ we extract positions 13579135 (dropping the second digit for alignment)  
giving $g_o(ACBA)$= 11x11x11 which is valid - it is $g'(g_o(ACBA))=BA$; 
and 24680246 giving  x1xx11x1, which is valid - it is $g'(g_e(ACBA))=CB$. 
For $R_3 = 1_4 \otimes R$ we extract positions 1593 (from the original 16 digit word) giving 1x11 $\sim A$; 
2604 giving xx1x $\sim C$; 3715 giving 11x1 $\sim B$; and 4826 giving 1x11 $\sim A$,
all of which are valid. 
\\
Observe that since $g_o$ extracts odd position entries, 
the odd position entries from $g_o(ACBA)$ give the 1593 entries from the original: $g_o(g_o(ACBA))= g_o(11x11x11) = 1x11 $ (abusing notation slightly; 
and  the even positions give the 3715 entries: $ g_e(g_o(ACBA))= g_e(11x11x11) = 11x1$.
Meanwhile $g_o(g_e(ACBA))  $ takes odd positions from the even position subset, so positions 2604 from the original:  $g_o(g_e(ACBA)) =g_o(x1xx 11x1) = xx1x $; 
and finally $g_e(g_e(ACBA)) = g_e(x1xx 11x1) = 1x11$ gets positions 4826. 
\\ 
We can write in summary 
\[
ACBA \leadstog BACB \leadstog ACBA 
\]

For $n=5$ we have BADBACBA giving 
 11x1 1x11 x1xx 11x1 1x11 xx1x 11x1 1x11
(separated just to aid counting). 
The decomposition into odd and even parts for $R_2$ gives: 1x11 xx1x 11x1 1x11 and 11x1 1x11 x1xx 11x1, which is valid. 
(These recode as ACBA and BADB)
The decomposition mod 4, for $R_3$, can be done by iterating the odd/even, to give 
11x1 1x11 $\sim BA$, x1xx 11x1 $\sim DB$, 
1x11 xx1x  $\sim AC$, 11x1 1x11 $\sim BA$. 
Finally mod 8, for $R_4$, we have 1x11, 11x1, xx1x, 1x11, 11x1, x1xx, 1x11, 11x1. 
Observe the summary:
\[
BADB \; ACBA \leadstog ACBA \; BADB \leadstog BADB \; ACBA \leadstog ABCA \; BDAB
\]

Overall we see that our claimed intertwiners work provided that all the necessary applications of $g_-$ are well-defined (each giving the verification of intertwining for a corresponding $R_i$. 
It remains only to show that every application of $g_-$ is well-defined - i.e. produces a word in the image of $g$. 

Observe that the first (4-term) factor in $g_-(X_1 X_2 X_3 ...)$ comes from positions 1357 of the original 
(recall $g_o$ picks factors in positions 135791357...), so depends only on $X_1 X_2$. The next factor (for sufficiently long word) comes from positions 9135, so  only depends on $X_3 X_4$. And so on until:  
The first factor in the $g_e$ part comes from positions 2468, so only depends on $X_1 X_2$. 
And the next depends on $X_3 X_4$, and so on. 
Thus to show that every application of $g_-$ is well-defined we only need to show that the image of every 
word of length 2 (that can arise) is well-defined. 
Considering $g_-(BB) = g_o(11x1 11x1) g_e( 11x1 11x1 ) = 1x1x ...$, which is not well-defined, 
we must establish that BB and some other sequences do not arise as $X_{2i-1} X_{2i}$. 
But this is clear from $\varphi$ - every such pair arises as the image of a single symbol, and so the total 
set of possibilities is BA, AC, DB, CD. 
So it only remains to check these. We already checked BA, AC. For DB we have x1xx 11x1, giving xx1x and 1x11.
For CD we have xx1x x1xx giving x1xx and xx1x.
\qed 


\subsubsection{A comparative case study: the point $R_{\diamondsuit}$ in $R_{fII}$}    \label{ss:casediamond}

\newcommand{\II}{{\mathfrak{I}}}  

It is easy to verify here that every matrix of the form 
\beq \label{eq:commfII} 
\left[\begin{array}{cccccccc}
a  & h  & h  & c  & h  & c  & c  & d  
\\
 0 & b  & 0 & f  & 0 & f  & e  & g  
\\
 0 & 0 & b  & f  & 0 & e  & f  & g  
\\
 0 & 0 & 0 & a  & 0 & 0 & 0 & h  
\\
 0 & 0 & 0 & e  & b  & f  & f  & g  
\\
 0 & 0 & 0 & 0 & 0 & a  & 0 & h  
\\
 0 & 0 & 0 & 0 & 0 & 0 & a  & h  
\\
 0 & 0 & 0 & 0 & 0 & 0 & 0 & b  
\end{array}\right]
\eq  
lies in the commutant. 
(Probably it is instructive to compare with the $fI$ commutant in \ref{eq:commfI} above.) 
In fact we claim every element of the commutant takes this form. This is a stronger claim and so harder to verify. But let us take it on trust for now. 
Then it is clear that $\AAAA'$ has a 2d semisimple part ($a,b$); and a 6d radical ($c,d,e,f,g,h$). 
This seems to suggest two non-isomorphic indecomposable summands in $\AAAA$. 

By parity 
(as in (\ref{lem:parity}))
the two blocks are the 111,122,212,221 block; and the 112,121,211,222 block. 
\\
\ppm{Give the blocks and show that one is isomorphic to the transpose of the other.}
\\
Both blocks have simple content 1/2/1 (writing a simple module just as its dimension) in some order. 
Let us try the following ansatz for the socle series for one of them: 
\[
\II_{odd}' \; = \; \begin{array}{c}
    1   \\
     2 \\
     1
\end{array}
\]
- note this allows a class potentially bigger than an isomorphism class. 
This would allow two self-maps: the self-isomorphism and the head to socle map. 
However, since each indecomposable/block module is the contravariant dual of the other 
(via the transpose isomorphism), and this structure is self-dual, this would imply two isomorphic modules, and hence a 2x2 matrix algebra in the commutant,
contradicting (\ref{eq:commfII}). So we must reject this ansatz. 

Instead we can try 
\[
\II_{odd} = \; \begin{array}{c}
    1  \oplus 2 \\
     1
\end{array} ,   \hspace{1cm} 
\II_{eve} = \; \begin{array}{c}
 1 \\
    1  \oplus 2 
\end{array}
\]
Again each has two self-maps - isomorphism; and a map taking the 1 in the head to the 1 in the socle. And there is no isomorphism between them. Instead there are head-to-socle 1-maps both ways; and a head-to-socle 2-map from odd to even. 
...

...
This agrees in part with what we see in \ref{eq:commfII}, where the $a$ sector has $a$ and $c$;
and the $b$ sector has $b$ and $g$. 
However ...

\subsubsection{The case $R_{\heartsuit}$}

We can also determine the indecomposable blocks in all ranks in this case. 
We proceed as follows

Direct computation shows that  $T\in(\AAAA_3^\heartsuit)'$ has the form: 
\[\left[ \begin {array}{cccccccc} a&b&b&c&b&c&c&d\\ \noalign{\medskip}0
&a&0&b&0&b&0&c\\ \noalign{\medskip}0&0&a&b&0&0&b&c
\\ \noalign{\medskip}0&0&0&a&0&0&0&b\\ \noalign{\medskip}0&0&0&0&a&b&b
&c\\ \noalign{\medskip}0&0&0&0&0&a&0&b\\ \noalign{\medskip}0&0&0&0&0&0
&a&b\\ \noalign{\medskip}0&0&0&0&0&0&0&a\end {array} \right].\] 
This provides the base case for the proof by induction of the following:

\begin{lemma}\label{heartlem}
    Any $T\in(\AAAA_n^\heartsuit)'$ for $n\geq 3$ has the form: \[\begin{bmatrix} aI+N & B\\ 0& aI+N \end{bmatrix}\] where $N$ is a strictly upper triangular matrix.
\end{lemma}
\begin{proof}
    The by-now-familiar strategy is to observe that commuting with $I_2\otimes \AAAA_{n-1}^\heartsuit$ shows that $T\in(\AAAA_n^\heartsuit)'$ has the form 
    
    \[\begin{bmatrix} a_1I+N_1 & B_1 & a_2I+N_2 &B_2\\ 0& a_1I+N_1&0&a_2I+N_2\\
    a_3I+N_3 & B_3&a_4I+N_4&B_4\\0&a_3+N_3&0&a_4I+N_4\end{bmatrix}\]
    where $a_i$ are non-zero constants, the $N_i$ are upper triangular and the $I$ are identity matrices of size $2^{n-2}$.  Then commuting with \[R_\heartsuit\otimes I_{2^{n-2}}=
    \begin{bmatrix} I & 0 & 0 &I\\ 0& 0&I&0\\
    0 & I&0&0\\0&0&0&I\end{bmatrix}\] implies that $a_1=a_4$, $a_3=0$,  and $N_3=B_3=0$, and $N_1=N_4$ as required.
\end{proof}

From this we derive:
\begin{corollary}
    The algebra $\AAAA_n^\heartsuit$ is indecomposable.
\end{corollary}
\begin{proof}
   Let $E$ be any central idempotent in $\AAAA_n^\heartsuit$.  Then by Lemma \ref{heartlem} $E$ is of the form $aI+N$ where $N$ is nilpotent, hence $a\in\{0,1\}$ so $E$ is trivial.
\end{proof}

}}

\ignore{{
\subsection{Representation theory in rank $N=3$}

...what (little) can perhaps be quickly done. 
Certainly we can do the next bits:

\subsubsection{Little-rank $n=2$}

...}}

\subsection{Representation theory of Unipotent braid representations}  \label{ss:unipotentRT}

Each R-matrix presents problems to analyse in ordinary representation theory. 
Restricting to unipotent representations gives a playground which is both relatively manageable and also 
quite illustrative of the features that appear. So we finish by investigating this area. 
Type-f has a unipotent case, as noted above, corresponding to Temperley--Lieb at $[2]=0$. 
This itself is well-understood \cite{Martin94b,MartinWoodcock_98}. We will briefly summarize here, since it establishes a base, via the HoG Theorem, from which to study the glue extensions in this and higher ranks. 
We have 
\[
R_0 = \mat 1 \\ &0&1 \\ &-1&2 \\ &&&1 \tam  
\]
As noted above, the entire with-glue extension is captured by the case 
\[
R = \mat 1&&&1 \\ &0&1 \\ &-1&2 \\ &&&1 \tam  
\]
In the following table we give all the relevant algebra dimensions in all little-ranks up to $n=6$. 

\begin{center}
    \begin{tabular}{c|cc|cc}
      n   & dim $\AAA^0$   & dim $\AAA^0$/rad & dim $\AAA$  & dim $\AAA$/rad  
      \\ \hline 
      3   & 5   &   5     & 6  &   5    \\
      4   & 14  &   5     & 20 & 5   \\ 
      5   &    42& 42   & 70 & 42  \\
      6   & 132  & 42  & 252 & 42
    \end{tabular}
\end{center} 

Observe that all these dimensions come from the classical structure, even directly or via the HoG theorem, 
except for the total algebra dimensions in the with-glue cases. Clearly the glue greatly enlarges the radical, 
but we do not yet have a closed form for general $n$ for that with-glue case.

In rank $N=3$ we have a unipotent fff representation; and a corresponding representative glue extension:
\[
R_{fff}^0 = 
\mat 1&   0&  0&    0& 0& 0&   0& 0& 0 \\ 
    0&   2&   0&    1& 0& 0&   0& 0& 0 \\ 
    0&   0&   2&    0& 0& 0&   1& 0& 0 \\ 
    0&  -1&   0&    0& 0& 0&   0& 0& 0 \\
    0&   0&   0&    0& 1& 0&   0& 0& 0 \\
    0&   0&   0&    0& 0& 2&   0& 1& 0 \\ 
    0&   0&  -1&    0& 0& 0&   0& 0& 0 \\  
    0&   0&   0&    0& 0& -1&  0& 0& 0 \\
    0&   0&   0&    0& 0& 0&   0& 0& 1
    \tam , 
    \hspace{.71cm}
    R_{fff}^{g} = 
\mat 1&   0&  0&    0& 0& 0&   0& 0& 0 \\ 
    0&   2&   0&    1& 0& 0&   0& 0& 0 \\ 
    0&   0&   2&    0& 0& 0&   1& 0& 0 \\ 
    0&  -1&   0&    0& 0& 0&   0& 0& 0 \\
    0&   0&   0&    0& 1& 0&   0& 0& 1 \\
    0&   0&   0&    0& 0& 2&   0& 1& 0 \\ 
    0&   0&  -1&    0& 0& 0&   0& 0& 0 \\  
    0&   0&   0&    0& 0& -1&  0& 0& 0 \\
    0&   0&   0&    0& 0& 0&   0& 0& 1
    \tam
\]

The dimensions up to little-rank 6 are:

 \begin{center}  
    \begin{tabular}{c|cc|cc}
      n   & dim $\AAA^0$   & dim $\AAA^0$/rad & dim $\AAA^g$  & dim $\AAA^g$/rad  
      \\ \hline 
      3   &   6 &    5    & 6   &    5   \\
      4   &   23 &   5     & 24   & 5    \\ 
      5   & 103  & 42   & 120 & 42  \\
      6   & 513  & 513-215 =298  & 695 & 695-397 =298
    \end{tabular}
\end{center}

Again most dimensions here come from the classical structure, but not the with-glue total dimensions, 
which are much larger than for the classical unipotent fff case. 
See Fig.\ref{fig:placeho}. 
The dimension computations are as follows. 
\\ 
In little-rank $n=3$ the Specht module labels are 3, 21, $1^3$ (we use integer partitions and Young diagrams interchangeably).
The 21 is simple and in a simple block (it is alone here in its affine reflection group orbit), 
of representation dimension 2, and hence contributing 4 to algebra dimension. 
The 3 and $1^3$ are 1d, but isomorphic here, and this module self-extends, so we add 1 to the 
dimension of the semisimple part, and 1+1 to the total dimension. 
\\
In little-rank $n=4$ the labels are 4, 31, $2^2$, $21^2$. 
By \cite{Martin94b} the $R^0_{fff}$ is a flat deformation of the classical `TLM' algebra \cite{Brzezinski_1995}, 
which is the group algebra of 
$S_4$ with the 1d $1^4$ module missing, so the dimension is 24-1=23. Here the semisimple quotient comes from 
the TL part. 
\\
In little-rank $n=5$ the labels are 5, 41, 32, $31^2$, $2^2 1$. That is partitions $21^3$ and $1^5$ are 
omitted. Recall these have rep dimensions 4 and 1, hence the dimension for $R^0_{fff}$ is 120-17=103. 
The semisimple dimension comes from the TL part \cite{MartinWoodcock_98,Soergel97a,Soergel97b} 
- and this is the same for the with-glue case by the HoG theorem. 
\\
In little-rank $n=6$ the orbit structure is indicated in purple in the figure.  
The computations are similar. The labels, with their representation dimensions beneath, are
\[
\begin{array}{ccccccc|cccc}
6  &  51  &  42  &  33  &  411  &  321  &  222  & 3111& 2211 & 21^4 & 1^6
\\
1  &   5  &   9  &   5  &   10  &   16  &    5  &   10&   9  &  5   & 1
\end{array}
\]
(we include the labels which are omitted at $N=3$, to the right of the bar)
so the dimension of the image 
of the CC R-matrix fff 
for generic (hence semisimple) Hecke is the sum of squares: 513; 
but by \cite{Martin94b} the unipotent case is a flat deformation, so has the same dimension, 
even though it does not have the same structure. 
The TL part of the structure is the line from $\lambda=(6)$ to $(33)$, with Specht module morphisms indicated in purple, so unpacking we have 
\[
(6) \rightarrow (51) \rightarrow (42) \rightarrow (33)
\]
(the chain of short reflections in the affine Wely group) giving Specht module Loewy structures (i.e. socle series) 
\[
1 \rightarrow {4\over 1} \rightarrow {5\over 4} \rightarrow {0\over 5}
\]
The remaining labels in the block are 411 and 222. These contribute nothing to the semisimple part 
(for example by considering idempotent localisation to little-rank $n=3$; or by an application of Kazhdan--Lusztig theory). Finally we have 321 with 
representation dimension 16, which is a simple singleton here. So overall the semisimple quotient dimension is 
$1^2 + 4^2 + 5^2 + 16^2 = 298$. 
By the HoG theorem this is also the semisimple quotient dimension in the with-glue case. 


\begin{figure}
    \centering
    \includegraphics[width=0.95\linewidth]{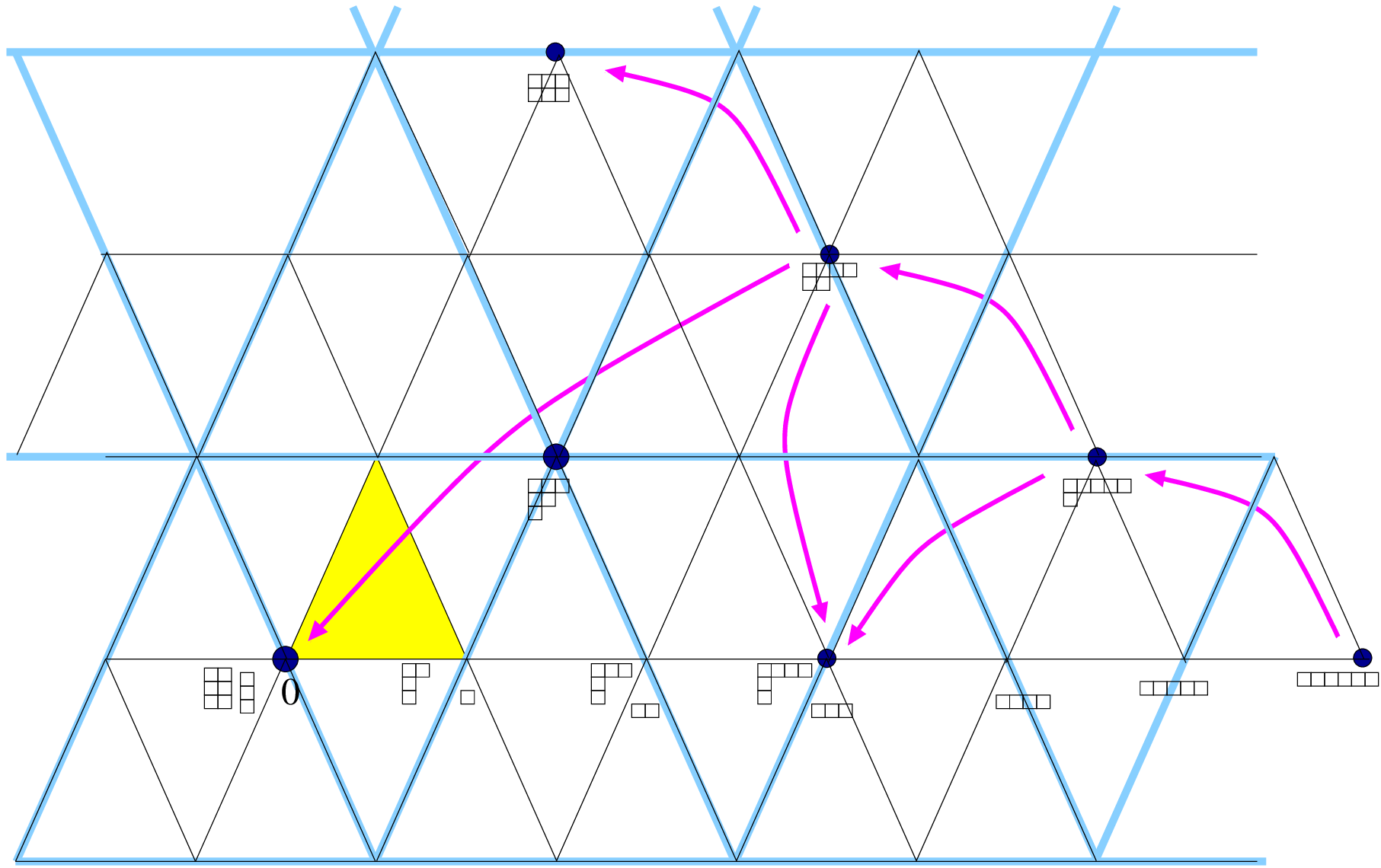}
    \caption{Alcove geometric framework for unipotent fff. 
    This is the $\Z^3$ nearest neighbour lattice projected along the (1,1,1) line; with affine reflection walls at separation 2 (in blue). 
    The origin and the start of the dominant region (in Lie terminology) is marked with a yellow triangle.}
    \label{fig:placeho}
\end{figure}


\medskip 

The pattern in the $R_{fff}^0$ case, and the semisimple part of the with-glue case, will by now be clear.  
And observe that this is enough to show the large contribution made to the radical by the glue extension. 
We leave an analysis of the new structure to a separate work 
(there will be a nice interplay between idempotent localisation and the R-matrix tower unification here, but it is far from trivial). 

\medskip 

\section{Discussion}
We have shown that there are braid representations in $\MATCHwg$ that are not equivalent, up to various interpretations, to those found in known classifications. At the moment this amounts to construction, rather than classification. It is not infeasible that a classification in rank $3$ can be carried out. Moreover, the evidence in rank $2$ suggests that the CCwg cases cover a wide swath of all such representations, up to $\infty$-equivalence.  The images of these representations are also intriguing, and deserve future attention.

\vspace{.53cm} 

\noindent {\bf Acknowledgements}.
PM thanks EPSRC for funding under project EP/W007509/1; 
and thanks the Royal Society for funding ER's Wolfson Fellowship at Leeds 
contributing to this work. PM also thanks 
Jarmo Hietarinta, 
Fiona Torzewska 
and Paula Martin for useful discussions. 
The work of ER was also partially supported by US NSF grant DMS-2205962.  Part of this work was carried out while ER was visiting the University of Leeds on a Royal Society Wolfson Fellowship, and he thanks them for their hospitality and support.

\bibliographystyle{alpha}
\bibliography{tex/references,tex/refs2,tex/more}

\newcommand{\etalchar}[1]{$^{#1}$}
\begin{thebibliography}{MMM{\etalchar{+}}23}

\bibitem[AHS04]{adamek2004abstract}
Ji{\v{r}}{\'\i} Ad{\'a}mek, Horst Herrlich, and George~E Strecker.
\newblock {\em Abstract and concrete categories. The joy of cats}.
\newblock Citeseer, 2004.

\bibitem[Alm25]{Almateari}
S~Almateari.
\newblock {\em Categories for representation theory}.
\newblock PhD thesis, Mathematics, University of Leeds, 2025.

\bibitem[Alm26]{Almateari26}
S~Almateari.
\newblock Dimensions of spaces of morphisms in the category of additive charge-conserving matrices and a unification of integer sequences.
\newblock {\em to appear}, 2026.

\bibitem[BCP97]{Magma}
Wieb Bosma, John Cannon, and Catherine Playoust.
\newblock {\em The {Magma} Algebra System {I}: The User Language}, 1997.

\bibitem[BK95]{Brzezinski_1995}
T~Brzezinski and J~Katriel.
\newblock Representation-theoretic derivation of the {T}emperley-lieb-martin algebras.
\newblock {\em Journal of Physics A: Mathematical and General}, 28(18):5305–5312, September 1995.

\bibitem[Coh03]{Cohn2003}
P.~M. Cohn.
\newblock {\em Further algebra and applications.}
\newblock London: Springer, revised ed. of {Algebra}, 2nd ed. (1982--1991) edition, 2003.

\bibitem[Hie92]{Hietarinta92}
Jarmo Hietarinta.
\newblock All solutions to the constant quantum {Yang}–{Baxter} equation in two dimensions.
\newblock {\em Physics Letters A}, 165(3):245--251, 1992.

\bibitem[HMR24]{HietarintaMartinRowell}
Jarmo Hietarinta, Paul Martin, and Eric~C. Rowell.
\newblock Solutions to the constant {Y}ang--{B}axter equation: additive charge conservation in three dimensions.
\newblock {\em Proceedings of the Royal Society A: Mathematical, Physical and Engineering Sciences}, 480(2294):20230810, 2024.

\bibitem[{Map}25]{Maple}
{Maplesoft, a division of Waterloo Maple Inc.}
\newblock {\em Maple}.
\newblock Maplesoft, Waterloo, Ontario, 2025.

\bibitem[Mar92]{Martin94b}
P~Martin.
\newblock On {S}chur--{W}eyl duality, ${A}_n$ {H}ecke algebras and quantum $sl({N})$ on ${C}^{N}$...
\newblock {\em Int J Mod Phys A7}, pages 645--673, 1992.

\bibitem[Maz17]{mazorchuk2017}
Volodymyr Mazorchuk.
\newblock Classification problems in 2-representation theory, 2017.
\newblock arXiv:1703.10093.

\bibitem[ML13]{mac2013categories}
Saunders Mac~Lane.
\newblock {\em Categories for the working mathematician}.
\newblock Springer Science \& Business Media, 2013.

\bibitem[MM15]{mazorchuk2015transitive}
Volodymyr Mazorchuk and Vanessa Miemietz.
\newblock Transitive 2-representations of finitary 2-categories, 2015.
\newblock arXiv:1404.7589.

\bibitem[MMM{\etalchar{+}}23]{Mackaay_2023}
Marco Mackaay, Volodymyr Mazorchuk, Vanessa Miemietz, Daniel Tubbenhauer, and Xiaoting Zhang.
\newblock Simple transitive 2‐representations of soergel bimodules for finite coxeter types.
\newblock {\em Proceedings of the London Mathematical Society}, 126(5):1585–1655, February 2023.

\bibitem[MR21]{MartinRowell2021}
Paul Martin and Eric~C Rowell.
\newblock Classification of spin-chain braid representations.
\newblock {\em arXiv preprint arXiv:2112.04533}, 2021.

\bibitem[MRT25]{MRT25}
P.~P. Martin, E.~C. Rowell, and F.~Torzewska.
\newblock A categorical perspective on braid representations.
\newblock {\em arXiv 2506.07950}, 2025.

\bibitem[Mur38]{Murnaghan}
Francis~D Murnaghan.
\newblock {\em The theory of group representations}.
\newblock The Johns Hopkins Press, 1938.

\bibitem[MW98]{MartinWoodcock_98}
Paul~P Martin and David Woodcock.
\newblock On quantum spin-chain spectra and the structure of hecke algebras and q-groups at roots of unity.
\newblock {\em Journal of Physics A: Mathematical and General}, 31(50):10131, 1998.

\bibitem[RW18a]{RowellWangBull}
Eric~C. Rowell and Zhenghan Wang.
\newblock Mathematics of topological quantum computing.
\newblock {\em Bulletin of the American Mathematical Society}, 55(2):183--238, 2018.

\bibitem[RW18b]{RumyninWendland}
Dmitriy Rumynin and Alex Wendland.
\newblock 2-groups, 2-characters, and {B}urnside rings.
\newblock {\em Advances in Mathematics}, 338:196--236, 2018.

\bibitem[Soe97a]{Soergel97b}
W.~Soergel.
\newblock Charakterformeln f{\"u}r {K}ipp-{M}oduln {\"u}ber {K}ac-{M}oody-{A}lgebren.
\newblock {\em Rep Theory}, 1:115--132, 1997.

\bibitem[Soe97b]{Soergel97a}
W.~Soergel.
\newblock {K}azhdan-{L}usztig polynomials and a combinatoric for tilting modules.
\newblock {\em Rep Theory}, 1:83--114, 1997.

\bibitem[Tur88]{Turaev88}
Vladimir~G. Turaev.
\newblock The {Y}ang--{B}axter equation and invariants of links.
\newblock {\em Inventiones Mathematicae}, 92(3):527--553, 1988.

\end{thebibliography}

\end{document}